\newcommand{\supp}{\operatorname{supp}}
\newcommand{\R}{\mathbb R}
\newcommand{\N}{\mathbb N}
\newcommand{\C}{\mathbb C}
\newcommand{\Z}{\mathbb Z}
\renewcommand{\Re}{\mathop{\text{\upshape{Re}}}}
\renewcommand{\Im}{\mathop{\text{\upshape{Im}}}}
\newcommand{\cl}{\text{\upshape{cl}}}
\newcommand{\op}{\mathop{\text{\upshape{op}}}}
\renewcommand{\div}{\mathop{\text{\upshape{div}}}}
\newcommand{\rank}{\mathop{\text{\upshape{rank}}}}
\newcommand{\ms}[1]{\mathscr{#1}}
\renewcommand{\Im}{\operatorname{Im}}
\newcommand{\diag}{\operatorname{diag}}
\renewcommand{\epsilon}{\varepsilon}
\newcommand{\norm}[1]{\left\lVert#1\right\rVert}
\renewcommand{\tilde}{\widetilde}
\renewcommand{\phi}{\varphi}
\newtheorem{theorem}{Theorem}[section]
\newtheorem{lemma}[theorem]{Lemma}
\newtheorem{corollary}[theorem]{Corollary}
\theoremstyle{definition}
\newtheorem{definition}[theorem]{Definition}
\newtheorem{remark}[theorem]{Remark}
\newtheorem{example}[theorem]{Example}
\numberwithin{equation}{section}
\begin{document}

\title[Dispersive mixed-order systems in $L^p$-Sobolev spaces]
{Dispersive mixed-order systems in $L^p$-Sobolev spaces and application to the thermoelastic plate equation}
\author{Robert Denk}
\address{Universit\"at Konstanz, Fachbereich f\"ur Mathematik und Statistik,
         78457 Konstanz, Germany}
\email{robert.denk@uni-konstanz.de}
\author{Felix Hummel}
\address{Universit\"at Konstanz, Fachbereich f\"ur Mathematik und Statistik,
         78457 Konstanz, Germany}
         \email{felix.hummel@uni-konstanz.de}
\thanks{}
\date{January 30, 2018}
\begin{abstract}
We study dispersive mixed-order systems of pseudodifferential operators in the setting of $L^p$-Sobolev spaces. Under the weak condition of quasi-hyperbolicity, these operators generate a semigroup in the space of tempered distributions. However, if the basic space is a tuple of $L^p$-Sobolev spaces, a strongly continuous semigroup is in many cases only generated if $p=2$ or $n=1$. The results are applied to the linear thermoelastic plate equation inertial term and with Fourier's or Maxwell-Cattaneo's law of heat conduction.
\end{abstract}

\subjclass[2010]{35M31; 35S10; 35E15}

\keywords{Mixed-order systems, pseudodifferential operators, thermoelastic plate equation}

\maketitle

\section{Introduction}

Our investigation is motivated by the analysis of the linear thermoelastic plate equation in the whole space which is given by
\begin{equation}
  \label{1-1}
  \begin{aligned}
    u_{tt} + \Delta^2 u -\mu \Delta u_{tt} + \Delta \theta & = 0 \quad\text{ in }(0,\infty)\times\R^n,\\
    \theta_t + \div q - \Delta u_t & = 0 \quad\text{ in }(0,\infty)\times\R^n,\\
    \tau q_t + q+\nabla\theta & = 0 \quad\text{ in }(0,\infty)\times\R^n,
  \end{aligned}
\end{equation}
supplemented by initial conditions. In \eqref{1-1}, the unknown functions are $u$, $\theta$, and $q$, where $u$ describes the elongation of a plate and $\theta$ and $q$ model the temperature (relative to a fixed reference temperature) and the heat flux, respectively. The parameters $\tau,\mu\ge0 $ are chosen depending on the underlying model. For $\mu>0$, an inertial term is included, for $\tau =0$ the classical Fourier law of heat conduction is assumed, while for $\tau>0$ we take the Cattaneo-Maxwell law. System \eqref{1-1} was investigated in many papers, in particular in the setting of $L^2$-Sobolev spaces. We refer, e.g., to the papers by Lasiecka and Triggiani (\cite{lasiecka-triggiani98}, \cite{lasiecka-triggiani98a}), Racke and Ueda \cite{racke-ueda16}, Said-Houari \cite{said-houari13}, and Ueda, Duan, and Kawashima \cite{ueda-duan-kawashima12} and the references therein.

The aim of the present paper is to study system \eqref{1-1} and general mixed-order systems of pseudodifferential operators in the setting of $L^p$-Sobolev spaces for $p\not=2$. It is well known that the wave equation is well-posed in $L^p$ if and only if $n=1$ (see Littman \cite{littman63}, Peral \cite{peral80}).  Well-posedness in the $L^p$-setting for symmetric hyperbolic systems  was investigated by Brenner \cite{brenner66}. For such systems, the symbol has the form  $ a(\xi) = i\sum_{j=1}^n \xi_j a_j$ with symmetric matrices $a_j\in\R^{N\times N}$, and it was shown that such a system gives  raise to a well-posed Cauchy problem in $L^p$  if and only if the matrices $a_1,\dots, a_n$ commute (\cite{brenner66}, Theorem~1). In the present paper, we study more general mixed-order systems with symbol $a(\xi) = (a_{ij}(\xi))_{i,j=1,\dots,N}$ where each entry belongs to the H\"ormander symbol class $S_{\cl}^{\mu_{ij}}(\R^n)$ of classical pseudodifferential operators of order $\mu_{ij}$. In order to solve the Cauchy problem
\[ \big(\partial_t - a(D) \big) u(t) = 0 \; (t>0),\quad u(0)=u_0,\]
in $\R^n$, one has to study the symbol $e^{ta(\xi)}$. If the equation is quasi-hyperbolic (or correct in the sense of Petrovski\u{\i}), the operator generates a locally uniformly bounded semigroup in the space $\mathscr S'(\R^n;\C^N)$ of tempered distributions (see Theorem~\ref{2.2} below). For a survey on distributional Cauchy problems, we refer to the monograph by Ortner and Wagner \cite{ortner-wagner15}.
The generation of a strongly continuous semigroup (or, equivalently, the well-posedness of the Cauchy problem) in $L^p$-Sobolev spaces can be described by a condition on the multiplier norm of the symbol $e^{ta(\cdot)}$, see Theorem~\ref{2.7}. This result is a slight generalization of  classical results by Brenner \cite{brenner66} and H\"ormander \cite{hoermander60}. We remark that the symbol $e^{ta(\cdot)}$ can  formally  also be seen as the symbol of a Fourier integral operator with matrix-valued and complex phase function. For the scalar (and homogeneous) case of phase functions, many results are known on dispersive estimates in $L^p$, see, e.g., Ruzhansky \cite{ruzhansky01}, and Coriasco and Ruzhansky \cite{coriasco-ruzhansky14}. The main problem in our case and for \eqref{1-1} is the mixed-order structure of the system.

The basic space for a general mixed-order system will be of the form $X_p=\prod_{j=1}^N H_p^{s_j}(\R^n)$, where $H_p^s(\R^n)$ stands for the standard (Bessel potential) Sobolev space. If $s$ is an integer, this space coincides with the classical Sobolev space $W_p^s(\R^n)$. By real interpolation, also Sobolev-Slobedeckii spaces  $ W_p^s(\R^n)$ for non-integer $s$ and Besov spaces $B_{pq}^s(\R^n)$ can be considered. One of the main results of this paper, Theorem~\ref{3.9} below, states that dispersive mixed-order systems generate a $C_0$-semigroup in $X_p$ only in special cases.
In particular, after order reduction due to the definition of the space $X_p$, the operator has to be of order one. Even if this holds, there are restrictions on the eigenvalues if $n>1$. Roughly speaking, the general picture which is known for symmetric hyperbolic systems carries over to more general mixed-order systems.

In Section~4, we apply the above results to the thermoelastic plate equation \eqref{1-1}. In the case $\tau=\mu=0$, it is known that the related operator even generates an analytic semigroup in $L^p$ for every $p\in (1,\infty)$ (see Denk and Racke \cite{denk-racke06}). For the Cattaneo-Maxwell setting $\tau>0$, a $C_0$-semigroup is generated in $L^p$, $p\not=2$ only in the case $n=1$ and $\mu>0$
 (Theorem~\ref{4.3} and Theorem~\ref{4.4}). We remark here that for $\tau>0$ and $\mu=0$ the Cauchy problem is not well-posed even for $n=1$.

For the Fourier law $\tau=0$ (and $\mu>0$), the generation of $C_0$-semigroups again holds if and only if $n=1$ (Theorem~\ref{4.5}). This result cannot be obtained by a straightforward application of the general results, as the relevant part of the symbol is still a combination of first- and second-order. The only nontrivial eigenvalue of the principal symbol (which is of second order) has negative real part which does not lead to a contradiction with the generation of a semigroup. Therefore, to prove Theorem~\ref{4.5}, we explicitly apply an approximate diagonalization procedure (up to operators of order 0) which is motivated by the method in Kozhevnikov \cite{kozhevnikov96} (see also Denk, Saal, and Seiler \cite{denk-saal-seiler09}). This procedure gives a separation of the first-order and the second-order part of the symbol which yields the results on well-posedness in $L^p$.

\section{Well-posedness of the Cauchy problem}

In the following, let $\op[a]$ be a mixed-order  $N\times N$-system of pseudo-differential operators in $\R^n$ with $x$-independent symbols, i.e., $a=(a_{ij})_{i,j=1,\dots,N}$, where $a_{ij}\in S^{\mu_{ij}}(\R^n)$, $\mu_{ij}\in\R$. Here, $S^\mu(\R^n)=S^\mu_{1,0}(\R^n)$ stands for the standard H\"ormander class of $x$-independent symbols of order $\mu\in\R$, i.e., $S^\mu(\R^n)$ is the set of all smooth complex-valued functions $b\in C^\infty(\R^n)$ such that for each $\alpha\in\N_0^n$ there exists a $C_\alpha>0$ satisfying
\[ |\partial_\xi^\alpha b(\xi)|\le C_\alpha \langle \xi\rangle^{\mu-|\alpha|}\quad (\xi\in\R^n).\]
Here we have used the standard multi-index notation $\partial_\xi^\alpha =\partial_{\xi_1}^{\alpha_1}\ldots \partial_{\xi_n}^{\alpha_n}$ and have set $\langle \xi\rangle:=(1+|\xi|^2)^{1/2}$. Note that in this situation we have $a\in S^\mu(\R^n;\C^{N\times N})$ with $\mu:=\max_{i,j=1,\dots,n} \mu_{ij}$. As usual, the pseudo-differential operator related to the symbol $a$ is defined by
$ \op[a]\phi = \mathscr F^{-1} a\mathscr F\phi$ for all $\phi$ belonging to the $\C^N$-valued Schwartz space $\mathscr S(\R^n;\C^N)$. In the above formula, $\mathscr F$ stands for the Fourier transform which is defined by
\[ (\mathscr F\phi)(\xi) := \hat\phi(\xi) := (2\pi)^{-n/2} \int_{\R^n} e^{ix\cdot \xi} \phi(x)dx\quad (\xi\in\R^n)\]
for $\phi\in \mathscr S(\R^n;\C^N)$ and by duality extended to the space of tempered $\C^N$-valued distributions $\mathscr S'(\R^n;\C^N) := L(\mathscr S(\R^n); \C^N)$.

Let $\mathcal O_M(\R^n;\C^{N\times N})$ denote the space of all slowly increasing smooth functions, i.e., the space of all $a\in C^\infty(\R^n;\C^{N\times N})$ for which for each $\alpha\in\N_0^n$ there exist $C_\alpha, m_\alpha>0$ such that
\[ |\partial^\alpha a(\xi)|_{\C^{N\times N}} \le C_\alpha \langle\xi\rangle^{m_\alpha}\quad (\xi\in\R^n).\]
By definition of the H\"ormander class, we  have $S^\mu(\R^n;\C^{N\times N})\subset \mathcal O_M(\R^n;\C^{N\times N})$. It was shown in \cite{amann03}, Thm.~1.6.4, that for $a\in \mathcal O_M(\R^n;\C^{N\times N})$ the multiplication operator $\phi\mapsto  a\phi$ is a continuous linear operator belonging to   $L(\mathscr S(\R^n;\C^N))$. Moreover, there exists a unique hypocontinuous and bilinear map \[ \mathcal O_M(\R^n;\C^{N\times N})\times \mathscr S'(\R^n;\C^N) \to \mathscr S'(\R^n;\C^N),\quad (a,u)\mapsto au,\]
induced by the dual pairing
\[ \langle au,\phi\rangle_{\mathscr S'(\R^n;\C^N)\times \mathscr S(\R^n;\C^N)} = \langle u, a^\top \phi\rangle_{\mathscr S'(\R^n;\C^N)\times \mathscr S(\R^n;\C^N)} = \sum_{j=1}^N u_j\Big( \sum_{k=1}^N a_{kj}\phi_k\Big)\]
(\cite{amann03}, Thm.~1.6.4). Therefore, for $a\in \mathcal O_M(\R^n;\C^{N\times N})$, we obtain by this duality an operator $\op[a]\in L(\mathscr S'(\R^n;\C^N))$ (cf. also \cite{amann03}, Remark~1.9.11).
For $a\in S^\mu(\R^n;\C^{N\times N})$, we  consider the Cauchy problem
\begin{equation}
  \label{2-1}
  \begin{aligned}
  \partial_t u - \op[a] u & = 0\quad (t>0),\\
  u(0) & = u_0.
  \end{aligned}
\end{equation}
The following definition of quasi-hyperbolicity is classical and can be found, e.g., in  \cite{ortner-wagner90}. This condition is also called correct in the sense of Petrovski\u{\i} or Petrovski\u{\i} condition, see \cite{MR1155843}, Definition~2 on p.~168, and \cite{hoermander83},  p.~143.

\begin{definition}
  \label{2.1}
  Let $a\in \mathcal O_M(\R^n;\C^{N\times N})$. Then the Cauchy problem \eqref{2-1} is called quasi-hyperbolic if there exists a constant $M_a\in\R$ such that
  \begin{equation}
    \label{2-2}
    \det\big(\lambda-a(\xi))\not=0\quad (\Re\lambda>M_a,\, \xi\in\R^n).
  \end{equation}
\end{definition}
For a survey on distributional Cauchy problems and fundamental solutions, we mention  the monograph \cite{ortner-wagner15}. For differential operators, the following result can be found in \cite{bargetz-ortner15}, Proposition~3.

\begin{theorem}
  \label{2.2} Let $a\in S^\mu(\R^n;\C^{N\times N})$, and assume that equation \eqref{2-1} is quasi-hyperbolic. Then for every $u_0\in\mathscr S'(\R^n;\C^N)$ there exists a unique solution $u\in C^1([0,\infty);\mathscr S'(\R^n;\C^N))$ of \eqref{2-1}. This solution is given by $u(t) = \op[e^{t a(\cdot)}]u_0$. Moreover, the family $(T(t))_{t\ge 0}$ with $T(t) := \op[e^{t a(\cdot)}]$ is a locally uniformly bounded semigroup on $\mathscr S'(\R^n;\C^N)$. The analog results hold with $\mathscr S'(\R^n;\C^N)$ being replaced by $\mathscr S(\R^n;\C^N)$.
\end{theorem}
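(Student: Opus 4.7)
My plan is to verify that the candidate solution operator $T(t)=\op[e^{ta(\cdot)}]$ fits into the Amann framework quoted in the excerpt. There are three tasks: (i) show that $e^{ta(\cdot)}\in\mathcal O_M(\R^n;\C^{N\times N})$ for every $t\ge 0$, with seminorm bounds locally uniform in $t$; (ii) deduce from this the semigroup and regularity statements in $\mathscr S(\R^n;\C^N)$ and $\mathscr S'(\R^n;\C^N)$; (iii) prove uniqueness.

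Step (i) is the main analytic work and, in my view, the main obstacle. Under quasi-hyperbolicity every eigenvalue $\lambda_j(\xi)$ of $a(\xi)$ satisfies $\Re\lambda_j(\xi)\le M_a$, while the symbol estimate forces $|a(\xi)|\le C\langle\xi\rangle^\mu$. A convenient pointwise bound, obtainable either from a Dunford contour integral or from Schur triangulation, is
\[
|e^{ta(\xi)}| \le C\,e^{tM_a}\bigl(1+t\,|a(\xi)|\bigr)^{N-1}\le C_T\,\langle\xi\rangle^{(N-1)\mu}\qquad (t\in[0,T],\ \xi\in\R^n).
\]
For derivatives one iterates the Duhamel identity
\[
\partial_{\xi_k}e^{ta(\xi)}=\int_0^t e^{(t-s)a(\xi)}(\partial_{\xi_k}a)(\xi)\,e^{sa(\xi)}\,ds
\]
together with the symbol estimates on $\partial_\xi^\beta a$. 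After $|\alpha|$ differentiations one arrives at bounds $|\partial_\xi^\alpha e^{ta(\xi)}|\le C_{\alpha,T}\langle\xi\rangle^{m_\alpha}$ for some $m_\alpha\in\R$, with $C_{\alpha,T}$ locally uniform in $t$. This is precisely what membership in $\mathcal O_M$ demands. Note that $m_\alpha$ generally grows with $|\alpha|$, so $e^{ta(\cdot)}$ need not lie in any fixed H\"ormander class $S^m$; this is the reason $\mathcal O_M$, rather than $S^\mu$, is the natural home for the solution symbol.

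Granted step (i), the quoted Thm.~1.6.4 of \cite{amann03} gives that multiplication by $e^{ta(\cdot)}$, and hence $T(t)=\mathscr F^{-1}e^{ta(\cdot)}\mathscr F$, is a continuous endomorphism of $\mathscr S$ and of $\mathscr S'$ with operator seminorms bounded uniformly for $t$ in compact intervals. The pointwise identity $e^{(t+s)a(\xi)}=e^{ta(\xi)}e^{sa(\xi)}$ yields the semigroup law. Differentiability in $t$ follows by showing $h^{-1}(e^{(t+h)a(\xi)}-e^{ta(\xi)})\to a(\xi)e^{ta(\xi)}$ in $\mathcal O_M$ as $h\to 0$: one writes the difference as $h^{-1}\int_0^h a(\xi)(e^{(t+s)a(\xi)}-e^{ta(\xi)})\,ds$ and estimates with the bounds from step (i). This transfers to $C^1$-dependence of $t\mapsto T(t)u_0$ with derivative $\op[a]\,T(t)u_0$, so $u(t):=T(t)u_0$ solves \eqref{2-1}.

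For uniqueness, let $w\in C^1([0,\infty);\mathscr S'(\R^n;\C^N))$ satisfy $\partial_t w=\op[a]w$ and $w(0)=0$. Fourier transforming yields $\partial_t\hat w=a(\xi)\hat w$ in $\mathscr S'$. I use a duality trick: fix $T>0$ and $\phi\in\mathscr S(\R^n;\C^N)$, and set $\psi(t,\xi):=e^{(T-t)a(\xi)^\top}\phi(\xi)$. Applying step (i) to the transposed symbol $a^\top$ (which is quasi-hyperbolic with the same $M_a$, since it has the same spectrum), $\psi(t,\cdot)\in\mathscr S(\R^n;\C^N)$ for each $t\in[0,T]$ and $\partial_t\psi=-a(\xi)^\top\psi$. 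Then
\[
\frac{d}{dt}\langle\hat w(t),\psi(t,\cdot)\rangle=\langle a\hat w,\psi\rangle-\langle \hat w,a^\top\psi\rangle=0,
\]
hence $\langle\hat w(T),\phi\rangle=\langle\hat w(0),\psi(0,\cdot)\rangle=0$. Since $\phi$ and $T$ are arbitrary, $\hat w\equiv 0$ and therefore $w\equiv 0$. The parallel argument in $\mathscr S$ is obtained by reversing the roles of test function and distribution in the same computation.
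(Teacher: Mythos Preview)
Your proof is correct, but it takes a genuinely different route from the paper's. The paper does not estimate $e^{ta(\cdot)}$ directly; instead it verifies a \emph{resolvent} condition: using Cramer's rule together with the lower bound $|\det(\lambda-a(\xi))|\ge 1$ for $\Re\lambda\ge M_a+1$, it shows that $(\lambda-a(\cdot))^{-1}\in S^{\tilde\mu}(\R^n;\C^{N\times N})\subset\mathcal O_M$ for such $\lambda$. This hypothesis is then fed into the abstract machinery of \cite{amann03} (Theorem~3.1.3, Theorem~3.1.1, Remark~3.2.3(c)), which delivers existence, uniqueness, and the locally uniformly bounded semigroup in one stroke.

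Your approach trades this black box for explicit analysis of the exponential symbol: the Hermite--Genocchi/Schur bound $|e^{ta(\xi)}|\le C\,e^{tM_a}(1+t|a(\xi)|)^{N-1}$ makes the role of quasi-hyperbolicity transparent, the Duhamel identity handles derivatives, and the duality argument for uniqueness (using $e^{(T-t)a^\top}$ as test-function evolution) is entirely self-contained. The cost is that you must manage the $\mathcal O_M$ topology yourself when proving $C^1$-regularity; your sketch is correct but should be made precise by noting that the remainder $R_h(\xi)$ and all its $\xi$-derivatives are bounded by $C_\alpha\,|h|\,\langle\xi\rangle^{m_\alpha}$ uniformly for small $h$, which gives convergence in $\mathcal O_M$ and hence (by hypocontinuity of the pairing) in $\mathscr S'$. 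The payoff is a proof that relies only on the elementary fact that $\mathcal O_M$-multiplication is continuous on $\mathscr S$ and $\mathscr S'$, rather than on the deeper distributional Cauchy theory.
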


\begin{proof}
By \eqref{2-2}, we see that for all $\lambda\in\C$ with $\Re\lambda > M_a+1$, all eigenvalues of the matrix $\lambda-a(\xi)$ have real part not less than $1$. Therefore, $|\det(\lambda-a(\xi))|\ge 1$ if $\Re\lambda\ge M_a+1$. By Cramer's rule, every entry of the matrix $(\lambda-a(\xi))^{-1}$ is a quotient of the form $\frac{c_{ij}(\xi,\lambda)}{\det(\lambda-a(\xi))}$ where $c_{ij}(\xi,\lambda)$ stands for the cofactor.

It is well-known from the theory of pseudo-differential operators that sums and products of scalar symbols in $S^*:=\bigcup_{\mu\in\R} S^\mu(\R^n)$ belong to $S^*$ again. We also remark that derivatives with respect to $\xi$ of $\frac{c_{ij}(\xi,\lambda)}{\det(\lambda-a(\xi))}$ are again of the form $\frac{\tilde c_{ij}(\xi,\lambda)}{(\det(\lambda-a(\xi)))^m}$ with some $m\in\N$, where $\tilde c_{ij}$ depend  polynomially on the entries of the matrix $\lambda-a(\xi)$.

From this and the above estimate on the determinant we obtain
\[ (\lambda-a(\xi))^{-1} \in S^{\tilde \mu}(\R^n;\C^{N\times N})\quad (\Re\lambda> M_a+1)\]
for some $\tilde\mu\in\R$. In particular, $(\lambda-a(\xi))^{-1}\in \mathcal O_M(\R^n;\C^{N\times N})$ if $\Re\lambda >M_a+1$. Therefore, Theorem~3.1.3 in \cite{amann03} can be applied which states that there exists a unique fundamental solution for the Cauchy problem \eqref{2-1}.
This implies that \eqref{2-1} has a unique distributional solution (cf. \cite{amann03}, Theorem~3.1.1). On this other hand, by \cite{amann03}, Remark~3.2.3(c), $T(t):= \op[e^{ta(\cdot)}]$ defines a locally uniformly bounded semigroup $(T(t))_{t\ge 0}$ on $\mathscr S'(\R^n;\C^{N})$ and on $\mathscr S(\R^n;\C^N)$, and the unique solution $u$ of \eqref{2-1} is given by $u(t)=T(t)u_0$ for $t>0$.
\end{proof}

Whereas well-posedness of the Cauchy problem \eqref{2-1} holds in $\mathscr S'(\R^n;\C^N)$ under very weak assumptions, the situation is different if we consider $\op[a]$ as an unbounded operator in some Banach space $X\subset \mathscr S'(\R^n;\C^N)$. In particular, we are interested in the case $X=L^p(\R^n;\C^N)$.

\begin{definition}
  \label{2.3}
 Let $X$ be a Banach space with norm $\|\cdot\|$, and let $A\colon X\supset D(A)\rightarrow X$ be a closed and densely defined linear operator. Then the Cauchy problem
\begin{equation}\label{2-3}
\begin{aligned}
\partial_tu-Au& =0\quad(t>0)\\
	    	   u(0)&=u_0
\end{aligned}
\end{equation}
is called well-posed if for every $u_0\in D(A)$  there exists a unique (classical) solution $u\in C^1([0,\infty),X)$ of \eqref{2-3} with $u(t)\in D(A)\;(t> 0)$, and if for all $T>0$ there exists a constant $C_T>0$ such that for all $u_0\in D(A)$  we have that
\begin{equation}
  \label{2-4}
\norm{u(t)}\leq C_T\norm{u_0}\quad (t\in [0,T]).
\end{equation}
\end{definition}

It is well known that well-posedness is equivalent to the generation of a $C_0$-semigroup in $X$. On the other hand, this is also equivalent to the existence of a mild solution for all initial values $u_0\in X$. First, we give the definition (see, e.g., \cite{arendt-batty-hieber-neubrander11}, Def.~3.1.1).

\begin{definition}
  \label{2.4} A function $u\in C([0,\infty),X)$ is called a mild solution of the Cauchy problem \eqref{2-3} if for all $t\in [0,\infty)$ we have
  \[ \int_0^t u(s)ds \in D(A) \quad \text{and} \quad A\int_0^t u(s)ds = u(t)-u_0.\]
\end{definition}

\begin{theorem}
  \label{2.5} Let $A\colon X\supset D(A)\to X$ be a closed densely defined linear operator. Then the following statements are equivalent:
  \begin{enumerate}
    [(i)]
    \item The operator $A$ generates a $C_0$-semigroup on $X$.
    \item For all $u_0\in X$ there exists a unique mild solution of \eqref{2-3}.
    \item There exists a subspace $D\subset D(A)$ which is dense in $X$ such that for all $u_0\in D$ the Cauchy problem \eqref{2-3} has a unique classical solution $u$, and for every $T>0$ there exists $C_T>0$ such that \eqref{2-4} holds for all $u_0\in D$.
  \end{enumerate}
\end{theorem}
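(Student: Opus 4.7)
The plan is to prove the cyclic chain (i) $\Rightarrow$ (iii) $\Rightarrow$ (ii) $\Rightarrow$ (i). The first implication is immediate: if $A$ generates the $C_0$-semigroup $(T(t))_{t\ge 0}$, I choose $D:=D(A)$, so that $u(t):=T(t)u_0$ is a classical solution for every $u_0\in D$ by standard semigroup theory, and \eqref{2-4} follows from the local boundedness of $t\mapsto \norm{T(t)}_{L(X)}$.

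For (iii) $\Rightarrow$ (ii), I would first extend the solution map $u_0\mapsto u(t,u_0)$ from $D$ to all of $X$ using \eqref{2-4} and the density of $D$; call the resulting family $S(t)$. For $u_0\in D$, integrating $u'(s)=Au(s)$ and invoking the closedness of $A$ gives
\[
A\int_0^t u(s)\,ds \;=\; u(t)-u_0,
\]
so $u$ is already a mild solution. For general $u_0\in X$, I would approximate by $u_0^n\in D$ and pass to the limit in this identity, again using the closedness of $A$. Uniqueness reduces by linearity to showing that the only mild solution with datum $0$ is zero, which I would obtain by the same approximation strategy combined with the uniform bound \eqref{2-4}.

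The implication (ii) $\Rightarrow$ (i) is the heart of the statement and is essentially an abstract theorem due to J.~M.~Ball. I would define $T(t)u_0:=u(t,u_0)$, which is linear in $u_0$ by uniqueness. The graph of each $T(t)$ is closed thanks to the integral characterization in Definition~\ref{2.4} and the closedness of $A$; hence $T(t)\in L(X)$ by the closed graph theorem. Local boundedness of $t\mapsto \norm{T(t)}_{L(X)}$ on compact intervals follows from the uniform boundedness principle, strong continuity at $t=0$ is built into Definition~\ref{2.4}, and the semigroup law $T(t+s)=T(t)T(s)$ follows from uniqueness applied to the two mild solutions $\tau\mapsto T(\tau+s)u_0$ and $\tau\mapsto T(\tau)T(s)u_0$, both of which have initial value $T(s)u_0$.

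The main obstacle is verifying that the generator $B$ of the constructed $C_0$-semigroup coincides with $A$ and not merely with some closed extension. For $u_0\in D(A)$, the mild-solution identity together with the closedness of $A$ upgrades $T(\cdot)u_0$ to a classical solution, giving $D(A)\subset D(B)$ with $B=A$ on $D(A)$. For the reverse inclusion, given $v\in D(B)$, I would use the mild-solution property to write
\[
A\Big[\tfrac{1}{t}\int_0^t T(s)v\,ds\Big] \;=\; \tfrac{T(t)v-v}{t},
\]
and pass to the limit $t\to 0^+$: the right-hand side converges to $Bv$ since $v\in D(B)$, while $\tfrac{1}{t}\int_0^t T(s)v\,ds\to v$ by strong continuity, so the closedness of $A$ forces $v\in D(A)$ with $Av=Bv$, completing the identification $B=A$.
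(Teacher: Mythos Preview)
Your overall strategy is sound and, for the implication (ii) $\Rightarrow$ (i), you supply the full Ball-type argument where the paper simply invokes \cite{arendt-batty-hieber-neubrander11}, Theorem~3.1.12. The existence part of your (iii) $\Rightarrow$ (ii) is also essentially the same as the paper's.

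There is, however, a genuine gap in your uniqueness argument for (iii) $\Rightarrow$ (ii). You write that uniqueness ``reduces by linearity to showing that the only mild solution with datum $0$ is zero, which I would obtain by the same approximation strategy combined with the uniform bound \eqref{2-4}.'' But the approximation strategy only \emph{constructs} one particular mild solution; it says nothing about an arbitrary mild solution $v$ with $v(0)=0$. Such a $v$ is just some continuous function satisfying the integral identity in Definition~\ref{2.4}; you have no sequence of classical solutions converging to it, and \eqref{2-4} applies only to classical solutions with data in $D$. The paper closes this gap with a short but essential trick: given any mild solution $v$ with $v(0)=0$, set $w(t):=\int_0^t v(s)\,ds$. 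Then $w\in C^1$, $w(0)=0$, and the mild-solution identity gives $w(t)\in D(A)$ with $Aw(t)=v(t)=w'(t)$, so $w$ is a \emph{classical} solution with initial value $0\in D$. Now the uniqueness assumed in (iii) (together with \eqref{2-4} for $u_0=0$) forces $w\equiv 0$, hence $v=w'\equiv 0$.

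A smaller point: in (ii) $\Rightarrow$ (i), your claim that for $u_0\in D(A)$ the mild solution ``upgrades to a classical solution'' is correct but not immediate from the mild-solution identity and closedness of $A$ alone; one standard route is to check that $t\mapsto u_0+\int_0^t T(s)Au_0\,ds$ is itself a mild solution with datum $u_0$, whence it equals $T(\cdot)u_0$ by uniqueness and is manifestly $C^1$. Your argument for the reverse inclusion $D(B)\subset D(A)$ is clean and correct.
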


\begin{proof}
  The equivalence of (i) and (ii) is shown in \cite{arendt-batty-hieber-neubrander11}, Theorem~3.1.12. In the same theorem, it is also shown that (i) implies well-posedness of the Cauchy problem \eqref{2-3}, i.e. (iii) holds with $D:= D(A)$. Therefore, we only have to show that (iii) implies (ii).

 Let $u_0\in X$. Since $D$ is dense in $X$, there is a sequence $(u_{k,0})_{k\in\N}\subset D$ such that $u_{k,0}\rightarrow u_0$ in $X$ as $k\rightarrow\infty$. For all $k\in\N$ let $u_k\in C^1([0,\infty),X)$ be the classical solution to \eqref{2-3} with initial value $u_{k,0}$. Since $A$ is closed, the $u_k$ are mild solutions (cf. \cite{arendt-batty-hieber-neubrander11}, Prop.~3.1.2). By assumption, for any $T>0$ there is a constant $C_T>0$ such that
 \[
  \norm{u_k(t)-u_\ell(t)}\leq C_T\norm{u_{k,0}-u_{\ell,0}}\rightarrow 0\quad(\ell,k\rightarrow \infty)
 \]
Therefore, $(u_k)_{k\in\N}$ converges uniformly on compact subsets of $[0,\infty)$. We define the limit $u(t):=\lim_{n\rightarrow\infty}u_k(t)\;(t>0)$. Due to the estimate \eqref{2-4} for $u_0\in D$, the definition of $u(t)$ does not depend on the chosen sequence $(u_{k,0})_{k\in\N}\subset D$. Moreover, we get that
\[
 \int_0^tu(s)\,ds=\lim\limits_{k\rightarrow\infty}\int_0^tu_k(s)\,ds
\]
and
\[
 \lim\limits_{k\rightarrow\infty}A\int_0^tu_k(s)\,ds=\lim\limits_{k\rightarrow\infty}(u_k(t)-u_{k,0})=u(t)-u_0
\]
for all $t\geq0$. By the closedness of $A$, we get that $\int_0^tu(s)\,ds\in D(A)$ as well as $A\int_0^tu(s)\,ds=u(t)-u_0$. It remains to show that this mild solution is unique. Let $v\in C([0,\infty),X)$ be another mild solution. Then, for all $t\geq0$ we get
\[
 u(t)-v(t)-A\int_0^t(u(s)-v(s))\,ds=0.
\]
Defining $w(t):=\int_0^t(u(s)-v(s))\,ds$ we obtain a classical solution for the Cauchy problem to the initial value $w(0)=0$. By \eqref{2-4} with $u_0=0\in D$ we obtain $w(t)=0$ for all $t\geq0$ and therefore $u=v$.
\end{proof}

For mixed-order systems of pseudo-differential operators, the closedness of $\op[a]$ holds if we consider the maximal domain. More precisely, for $p\in (1,\infty)$ and $s\in \R$ let $H_p^s(\R^n)$ denote the Bessel potential space $H_p^s(\R^n) := \{u\in \mathscr S'(\R^n): \op[\langle\,\cdot\,\rangle^s]u\in L^p(\R^n)\}$ with its canonical norm. Then we obtain the following result.

\begin{lemma}
  \label{2.6}
  Let $s_1,\dots,s_n\in\R$, and let $X:=\prod_{j=1}^n H_p^{s_j}(\R^n)$. For the symbol $a\in S^\mu(\R^n;\C^{N\times N})$,  define the unbounded operator $A:= \op[a]$ in $X$ with (maximal) domain $D(A) := \{ u\in X: Au\in X\}$. Then $A$ is densely defined and closed.
\end{lemma}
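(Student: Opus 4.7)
The plan is to prove density and closedness separately, with both relying on the fact that $X$ embeds continuously into $\mathscr S'(\R^n;\C^N)$ and that $\op[a]$ acts continuously on the latter.

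For density, I would observe that since $a\in S^\mu(\R^n;\C^{N\times N})\subset \mathcal O_M(\R^n;\C^{N\times N})$, the operator $\op[a]$ maps $\mathscr S(\R^n;\C^N)$ into itself (this was already noted in the excerpt via \cite{amann03}, Thm.~1.6.4). Consequently $\mathscr S(\R^n;\C^N)\subset D(A)$. Since $\mathscr S(\R^n)$ is dense in $H_p^{s_j}(\R^n)$ for every $s_j\in\R$ and $p\in(1,\infty)$, the product space $\mathscr S(\R^n;\C^N)$ is dense in $X=\prod_{j=1}^N H_p^{s_j}(\R^n)$, and therefore $D(A)$ is dense in $X$.

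For closedness, let $(u_k)_{k\in\N}\subset D(A)$ with $u_k\to u$ in $X$ and $Au_k\to v$ in $X$; I must show $u\in D(A)$ and $Au=v$. The key observation is that the continuous embedding $H_p^{s_j}(\R^n)\hookrightarrow \mathscr S'(\R^n)$ yields a continuous embedding $X\hookrightarrow \mathscr S'(\R^n;\C^N)$. Hence $u_k\to u$ and $Au_k\to v$ also in the topology of $\mathscr S'(\R^n;\C^N)$. On the other hand, because $a\in\mathcal O_M(\R^n;\C^{N\times N})$, the operator $\op[a]$ belongs to $L(\mathscr S'(\R^n;\C^N))$ as recorded in the excerpt (via the hypocontinuous multiplication and duality). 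Therefore $Au_k\to Au$ in $\mathscr S'(\R^n;\C^N)$, and uniqueness of limits in $\mathscr S'(\R^n;\C^N)$ forces $Au=v\in X$. By the definition of the maximal domain this gives $u\in D(A)$, proving closedness.

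I do not anticipate a real obstacle; the whole argument is essentially the standard recipe for maximal realizations of Fourier multipliers. The only point worth double-checking is the continuous embedding $H_p^{s_j}(\R^n)\hookrightarrow \mathscr S'(\R^n)$ and the identification of $A$-action on $X$-elements with the $\mathscr S'$-action of $\op[a]$; both are immediate from the definition $H_p^{s_j}=\op[\langle\cdot\rangle^{-s_j}]L^p$ and the fact that $\op[a]$ on $\mathscr S'(\R^n;\C^N)$ is defined precisely by the dual pairing recalled above.
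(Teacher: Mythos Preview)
Your proposal is correct and follows essentially the same approach as the paper: density via $\mathscr S(\R^n;\C^N)\subset D(A)$, and closedness by passing to $\mathscr S'(\R^n;\C^N)$ where $\op[a]$ is continuous and limits are unique. The paper's proof is slightly terser (it does not spell out why $\mathscr S$ is dense in each $H_p^{s_j}$), but the argument is identical in substance.
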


\begin{proof}
  Because of $\mathscr S(\R^n;\C^N)\subset D(A)$, $A$ is densely defined. Let $(u_k)_{k\in\N}\subset D(A)$ with $u_k\to u$ in $X$ and $Au_k\to v$ in $X$ for $k\to\infty$. By the continuity of the embedding $X\subset\mathscr S'(\R^n;\C^N)$, we get that $Au_k\to v$ in $\mathscr S'(\R^n;\C^N)$. On the other hand, we also have $u_k\to u$ in $\mathscr S'(\R^n;\C^N)$, and $\op[a]$ is continuous in $\mathscr S'(\R^n;\C^N)$ which gives $Au_k\to Au$ in $\mathscr S'(\R^n;\C^N)$.

  Since $\ms{S}^{\prime}(\R^n,\C^N)$ is a Hausdorff space, it follows that $Au=v$ in $\ms{S}^{\prime}(\R^n,C^N)$ and by the injectivity of the embedding $X\hookrightarrow \mathscr S'(\R^n;\C^N)$, we get $u\in D(A)$ and $Au=v$ in $X$.
\end{proof}

For the investigation of $C_0$-semigroups in the space $X$ from the previous lemma, the notion of an $L^p$-Fourier multiplier is useful (cf. \cite{hoermander60}, Def.~1.3, and  \cite{brenner66}, Section~2). In the following, we always assume $p\in (1,\infty)$.

\begin{definition}
A function $m\in L^\infty(\R^n;\C^{N\times N})$ is called an $L^p$-Fourier multiplier if there exists a constant $c_p>0$ such that for all $u\in \mathscr S(\R^n;\C^N)$ we have $\op[m]u := \mathscr F^{-1} m \mathscr Fu\in L^p(\R^n;\C^N)$ and
\[ \|\op[m]u\|_{L^p(\R^n;\C^N)} \le c_p \|u\|_{L^p(\R^n;\C^N)}\quad (u\in \mathscr S(\R^n;\C^N)).\]
In this case, $\op[m]$ extends by continuity to a bounded linear operator $\op[m]\in L(L^p(\R^n;\C^N))$. We denote by $M_p^N$ the space of all $L^p$-Fourier multipliers and endow $M_p^N$ with the norm $\|m\|_{M_p^N} := \|\op[m]\|_{L(L^p(\R^n;\C^N))}$.
\end{definition}

A similar version of the following theorem was proved in (\cite{brenner73}, Lemma~5.1). However, the notion of well-posedness therein only requires initial data in $C_0^{\infty}(\R^n)$ to have a unique solution, whereas in the semigroup-theoretic notion of well-posedness all initial data in the domain of the generator should have a unique solution. It should also be noted that this theorem was proved in (\cite{arendt-batty-hieber-neubrander11}, Proposition~8.1.3) using Laplace transform techniques.
\begin{theorem}
  \label{2.7}
  Let $a\in S^\mu(\R^n;\C^{N\times N})$ be quasi-hyperbolic, and define the unbounded operator $A$ in $X:=L^p(\R^n;\C^N)$ by $A=\op[a]$ and $D(A) := \{u\in X: Au\in X\}$. Then the Cauchy problem \eqref{2-3} is well-posed if and only if for all $T>0$ there is a $C_T>0$ such that
   \begin{equation}
     \label{2-5}
     \big\|e^{ta(\cdot)}\big\|_{M_p^N}\leq C_T\quad (t\in [0,T]).
   \end{equation}
   In this case, the semigroup generated by $A$ is given by $(\op[e^{ta(\cdot)}])_{t\geq0}$.
\end{theorem}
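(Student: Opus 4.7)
The plan is to reduce both directions to the distributional theory already established in Theorem~\ref{2.2} combined with the characterization of well-posedness in Theorem~\ref{2.5}, taking $\mathscr{S}(\R^n;\C^N)$ as the dense core in $X = L^p(\R^n;\C^N)$. Throughout, the key bridge is that any classical $L^p$-solution of \eqref{2-3} is automatically an $\mathscr{S}'$-solution: this uses only the continuous embedding $L^p(\R^n;\C^N) \hookrightarrow \mathscr{S}'(\R^n;\C^N)$ and the continuity of $\op[a]$ on $\mathscr{S}'(\R^n;\C^N)$. The uniqueness statement of Theorem~\ref{2.2} then forces every such solution to coincide with $\op[e^{ta(\cdot)}]u_0$.

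For the direction \eqref{2-5} $\Rightarrow$ well-posedness, I would verify condition (iii) of Theorem~\ref{2.5} with $D := \mathscr{S}(\R^n;\C^N)$. Since $\op[a]$ preserves $\mathscr{S}$ (as $a\in S^\mu$), we have $D\subset D(A)$, and Theorem~\ref{2.2} tells us that for every $u_0\in D$ the function $u(t) := \op[e^{ta(\cdot)}]u_0$ lies in $C^1([0,\infty);\mathscr{S}(\R^n;\C^N))$, hence in $C^1([0,\infty);L^p)$, and solves \eqref{2-3}. Uniqueness in $L^p$ follows from the bridge above, while the a priori bound \eqref{2-4} is immediate from \eqref{2-5}:
\[
\|u(t)\|_{L^p} \le \|e^{ta(\cdot)}\|_{M_p^N}\,\|u_0\|_{L^p} \le C_T\,\|u_0\|_{L^p}.
\]
Theorem~\ref{2.5} then yields that $A$ generates a $C_0$-semigroup $(S(t))_{t\ge0}$ on $L^p$. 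On the dense subspace $\mathscr{S}$, $S(t)$ agrees with $\op[e^{ta(\cdot)}]$ by the uniqueness just established, and since both are bounded on $L^p$ (the first by general semigroup theory, the second by \eqref{2-5}), they coincide on all of $L^p$, giving the identification of the semigroup.

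Conversely, assume $A$ generates a $C_0$-semigroup $(S(t))_{t\ge0}$. Local boundedness of $C_0$-semigroups furnishes a constant $C_T$ with $\|S(t)\|_{L(L^p)} \le C_T$ for $t\in[0,T]$. For $u_0\in\mathscr{S}\subset D(A)$, the orbit $t\mapsto S(t)u_0$ is a classical $L^p$-solution, hence, via the bridge, equals $\op[e^{ta(\cdot)}]u_0$. Therefore
\[
\|\op[e^{ta(\cdot)}]u_0\|_{L^p} = \|S(t)u_0\|_{L^p} \le C_T\,\|u_0\|_{L^p}\qquad(u_0\in\mathscr{S},\ t\in[0,T]),
\]
which is exactly the definition of $e^{ta(\cdot)}\in M_p^N$ with norm bounded by $C_T$.

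The only delicate step is the bridge from classical $L^p$-solutions to $\mathscr{S}'$-solutions used to invoke uniqueness in Theorem~\ref{2.2}; this needs the already-recorded continuity properties of the embedding and of $\op[a]$ on $\mathscr{S}'$. Everything else is a routine juxtaposition of Theorems~\ref{2.2} and~\ref{2.5}, with $\mathscr{S}(\R^n;\C^N)$ playing the role of a dense core on which explicit classical solutions exist.
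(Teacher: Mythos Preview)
Your proof is correct and follows essentially the same route as the paper: both directions hinge on Theorem~\ref{2.2} (to identify any classical $L^p$-solution with $\op[e^{ta(\cdot)}]u_0$ via the embedding into $\mathscr S'$) and on Theorem~\ref{2.5}(iii) with the dense core $D=\mathscr S(\R^n;\C^N)$. Your sufficiency argument is in fact slightly more economical than the paper's: where the paper verifies $u\in C^1([0,\infty);L^p)$ by an explicit double application of the fundamental theorem of calculus together with the bound~\eqref{2-5}, you obtain this for free from $u\in C^1([0,\infty);\mathscr S)$ (given by Theorem~\ref{2.2}) and the continuous embedding $\mathscr S\hookrightarrow L^p$.
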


\begin{proof}
  (i) Let \eqref{2-3} be well-posed, and let $u_0\in D(A)$. From Theorem~\ref{2.2}, we know that $u(t) := \op[e^{ta(\cdot)}]u_0$ is the unique solution in $\mathscr S'(\R^n;\C^N)$. By the definition of well-posedness, for $T>0$ there exists a $C_T>0$ such that
  \[ \|u(t)\|_{L^p(\R^n;\C^N)} = \|\op[e^{ta(\cdot)}]u_0\|_{L^p(\R^n;\C^N)} \le C_T \|u_0\|_{L^p(\R^n;\C^N)} \quad (t\in [0,T]).\]
  As $\mathscr S(\R^n;\C^N)\subset D(A)$, the function $e^{ta(\cdot)}$ is an $L^p$-Fourier multiplier, and its multiplier norm satisfies  \eqref{2-5}.

  (ii) Assume now that \eqref{2-5} holds. We define $m(t,\xi):= e^{ta(\xi)}$ and
 fix the initial value $u_0\in \mathscr S(\R^n;\C^N)$. By Theorem~\ref{2.2}, for $u(t):= \op[e^{ta(\cdot)}]u_0$ we obtain $u\in C^1([0,\infty),\mathscr S(\R^n,\C^N))$ with
\begin{equation}
 \label{2-7}
 \partial_t u(t) = \op[a] u(t) = \op[a] \op[e^{ta(\cdot)}] u_0 =  \op[e^{ta(\cdot)}]\op[a] u_0=  \op[e^{ta(\cdot)}] A u_0
\end{equation}
in $\mathscr S(\R^n,\C^N)$ for every $t\ge 0$. An iteration gives $\partial_t^2 u(t) =  \op[e^{ta(\cdot)}] A^2 u_0$.
 In particular, we obtain $u(t)\in L^p(\R^n;\C^N)$ and $Au(t)\in L^p(\R^n;\C^N)$ and therefore $u(t)\in D(A)$ for every $t\ge 0$.

Applying twice the fundamental theorem of calculus, we get as equality in $\mathscr S(\R^n;\C^N)$ for $t,h\ge 0$:
\begin{equation}\label{2-6}
  \tfrac 1h\big( u(t+h) - u(t)\big) - (\op[a]u) (t)  =   \int_0^1\int_0^1 s h \op[e^{(t+rsh)a(\cdot)}] A^2 u_0 dr\,ds.
\end{equation}
By assumption, $\|e^{ta(\cdot)}\|_{M_p^N}$ is uniformly bounded on bounded intervals. Therefore, we can estimate the $L^p$-norm of the right-hand side of \eqref{2-6} by
\[ \int_0^1 \int_0^1 s |h| \Big(\sup_{\tau\in [t,t+h]} \|e^{\tau a(\cdot)}\|_{M_p^N}\Big) \|A^2 u_0\|_{L^p(\R^n;\C^N)} dr\,ds \le C| h| \|A^2 u_0\|_{L^p(\R^n;\C^N)}.\]
The same argument holds for $t\ge 0$ and $h<0$ with $t+h\ge 0$. Therefore, we see that the left-hand side of \eqref{2-6} tends to zero in $L^p(\R^n;\C^N)$ for $h\to 0$. Consequently, we have $\partial_t u(t) = A u(t)$ in $L^p(\R^n;\C^N)$ for every $t\ge 0$.

In particular,  the above differentiability yields $u\in C([0,\infty),L^p(\R^n;\C^N))$. Due to the identity \eqref{2-7}, $\partial_t u$ is a solution of the Cauchy problem \eqref{2-1} with $u_0$ being replaced by $Au_0$. Therefore, $\partial_t u$ is continuous, too, and we have that $u\in C^1([0,\infty),L^p(\R^n;\C^N))$ is a classical solution.

By the assumption \eqref{2-5},
\[
 \norm{u(t)}_{L^p(\R^n;\C^N)}= \| \op[e^{ta(\cdot)}] u_0\|_{L^p(\R^n;\C^N)}\le
 C_T\norm{u_0}_{L^p(\R^n;\C^N)}\quad(t\in [0,T]).
\]
Therefore all assumptions of Theorem~\ref{2.5} (iii) are satisfied with $D = \mathscr S(\R^n;\C^N)$, and by Theorem~\ref{2.5} (i) we see that $A$ generates a $C_0$-semigroup which implies well-posedness of \eqref{2-3}.
\end{proof}

\begin{corollary}
  \label{2.8} If in the situation of Theorem~\ref{2.7} the Cauchy problem \eqref{2-3} is well-posed in $L^p(\R^n;\C^N)$ then it is well-posed in every $L^r(\R^n;\C^N)$ with $r\in [\min\{p,q\}, \max\{p,q\}]$. Here $q$ is the conjugate exponent to $p$, i.e. $\frac 1p + \frac 1q=1$.
\end{corollary}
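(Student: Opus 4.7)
The plan is to combine Theorem~\ref{2.7} with the classical scalar duality $\|\cdot\|_{M_p}=\|\cdot\|_{M_q}$ and with Riesz--Thorin interpolation. By Theorem~\ref{2.7}, well-posedness of \eqref{2-3} in $L^r$ is equivalent to a uniform bound $\|e^{ta(\cdot)}\|_{M_r^N}\le C_T$ on every compact interval $[0,T]$; so the task is to upgrade this bound from $r=p$ to all $r\in[\min\{p,q\},\max\{p,q\}]$.

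First, I would reduce the matrix-valued multiplier norm to its scalar entries via the two-sided equivalence
\[
c_N\max_{i,j}\|m_{ij}\|_{M_r}\le \|m\|_{M_r^N}\le C_N\max_{i,j}\|m_{ij}\|_{M_r}
\]
with constants depending only on $N$ and the range of $r$. The upper bound comes from summing the scalar entries via the triangle inequality on $\C^N$; the lower bound, from testing $\op[m]$ on functions concentrated on a single input coordinate and reading off a single output coordinate. Applied to $m=e^{ta(\cdot)}$ at $r=p$, the hypothesis yields uniform bounds on all scalar entries $(e^{ta(\cdot)})_{ij}$ in $M_p$ for $t\in[0,T]$.

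Next, I would invoke the classical duality $\|\phi\|_{M_p}=\|\phi\|_{M_q}$ for scalar Fourier multipliers, which follows from the identification $(\op[\phi])^*=\op[\overline{\phi}]$ of the $L^p$-to-$L^q$ Banach adjoint together with the conjugation invariance of $L^q$-multiplier norms (cf.\ \cite{hoermander60}). Combined with the equivalence of the first step, this delivers a uniform bound on $\|e^{ta(\cdot)}\|_{M_q^N}$ on $[0,T]$.

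Finally, since the same Fourier multiplier $\op[e^{ta(\cdot)}]$ is now known to be uniformly bounded on both $L^p(\R^n;\C^N)$ and $L^q(\R^n;\C^N)$, Riesz--Thorin interpolation (applied pointwise in $t$) would yield a uniform bound on $\|e^{ta(\cdot)}\|_{M_r^N}$ for every $r\in[\min\{p,q\},\max\{p,q\}]$, and one more application of Theorem~\ref{2.7} would convert this into well-posedness of \eqref{2-3} in $L^r$. The only non-routine ingredient is the scalar $M_p=M_q$ duality; with that classical result in hand, the componentwise reduction and the subsequent Riesz--Thorin step are entirely mechanical.
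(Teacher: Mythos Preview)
Your argument is correct and follows essentially the same route as the paper. The paper's proof is a one-liner: it invokes Theorem~\ref{2.7} together with the inclusion $M_p^N=M_q^N\subset M_r^N$ from \cite{hoermander60}, Theorem~1.3. What you have done is unpack precisely that inclusion---the componentwise reduction to scalar multipliers, the scalar duality $\|\cdot\|_{M_p}=\|\cdot\|_{M_q}$, and Riesz--Thorin interpolation are exactly the ingredients behind H\"ormander's result---so there is no genuine methodological difference, only a difference in how much is cited versus reproved.
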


\begin{proof}
  This follows from the equivalence in Theorem~\ref{2.7} and the fact that $M_p^N = M_q^N \subset M_r^N$, see \cite{hoermander60}, Theorem~1.3.
\end{proof}

\begin{corollary}\label{2.9}
  Let $a\in S^\mu(\R^n;\C^{N\times N})$ be quasi-hyperbolic, and define $A$ in $X:= \prod_{j=1}^N H_p^{s_j}(\R^n)$ as in Lemma~\ref{2.6}. Then the following statements are equivalent:
  \begin{enumerate}
    [(i)]
    \item The Cauchy problem \eqref{2-3} is well-posed in $X= \prod_{j=1}^N H_p^{s_j}(\R^n)$.
    \item Let $\Lambda(\xi) := \diag(\langle\xi\rangle^{s_1},\dots,\langle\xi\rangle^{s_n})$. Then for all $T\ge 0$ there exists a $C_T>0$ such that
        \begin{equation}\label{2-11}
         \big\| \Lambda e^{ta(\cdot)}\Lambda^{-1}\big\|_{M_p^N} \le C_T\quad (t\in [0,T]).
        \end{equation}
    \item For every $s\in \R$, the Cauchy problem is well-posed in $X= \prod_{j=1}^N H_p^{s_j+s}(\R^n)$.
  \end{enumerate}
\end{corollary}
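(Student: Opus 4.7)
The plan is to reduce Corollary~\ref{2.9} to Theorem~\ref{2.7} by conjugating with the diagonal Bessel-potential multiplier $\Lambda(D)$. Since each $\op[\langle\,\cdot\,\rangle^{s_j}]$ is an isomorphism $H_p^{s_j}(\R^n)\to L^p(\R^n)$, the operator $\Lambda(D)=\op[\Lambda]$ is an isomorphism from $X=\prod_{j=1}^N H_p^{s_j}(\R^n)$ onto $L^p(\R^n;\C^N)$. Under this isomorphism, the closed operator $A=\op[a]$ on $X$ (with maximal domain, as in Lemma~\ref{2.6}) is similar to the operator $\tilde A$ on $L^p(\R^n;\C^N)$ with symbol $\tilde a:=\Lambda a \Lambda^{-1}\in S^{\tilde\mu}(\R^n;\C^{N\times N})$, where $\tilde\mu:=\max_{i,j}(\mu_{ij}+s_i-s_j)$. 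Note that $\tilde a$ is quasi-hyperbolic as well, since $\tilde a(\xi)$ and $a(\xi)$ are similar and hence have the same spectrum.

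For the equivalence of (i) and (ii), I would first observe that well-posedness of \eqref{2-3} in $X$ is equivalent to well-posedness of the corresponding Cauchy problem for $\tilde A$ in $L^p(\R^n;\C^N)$, since $\Lambda(D)$ induces a topological isomorphism of the corresponding spaces of classical solutions (and maps $D(A)$ onto $D(\tilde A)$). Then Theorem~\ref{2.7} applied to $\tilde A$ gives that well-posedness in $L^p(\R^n;\C^N)$ is equivalent to
\[
\bigl\|e^{t\tilde a(\cdot)}\bigr\|_{M_p^N}\le C_T\quad(t\in[0,T]).
\]
The key algebraic point is that because $\Lambda(\xi)$ is invertible pointwise,
\[
e^{t\tilde a(\xi)}=e^{t\Lambda(\xi)a(\xi)\Lambda(\xi)^{-1}}=\Lambda(\xi)e^{ta(\xi)}\Lambda(\xi)^{-1},
\]
which identifies the multiplier in the exponential with the one appearing in \eqref{2-11}. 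Combined with the generation formula from Theorem~\ref{2.7}, this also yields the explicit form of the semigroup on $X$ as $\op[e^{ta(\cdot)}]$ (via conjugation with $\Lambda(D)$).

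The equivalence of (ii) and (iii) is immediate once (i)$\Leftrightarrow$(ii) is established: for any $s\in\R$, the analogous diagonal multiplier for the shifted exponents $s_j+s$ is $\Lambda_s(\xi)=\langle\xi\rangle^s\Lambda(\xi)$. Since $\langle\xi\rangle^s$ is a scalar factor that commutes with the matrix $e^{ta(\xi)}$,
\[
\Lambda_s(\xi)e^{ta(\xi)}\Lambda_s(\xi)^{-1}=\Lambda(\xi)e^{ta(\xi)}\Lambda(\xi)^{-1},
\]
so the multiplier-norm condition \eqref{2-11} is independent of the shift $s$. Applying (i)$\Leftrightarrow$(ii) to the shifted tuple $(s_j+s)_{j=1,\dots,N}$ then yields (iii), while (iii)$\Rightarrow$(i) is the special case $s=0$.

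I do not expect a serious obstacle: the only points that require care are (a) verifying that the conjugation $A\mapsto \Lambda(D)A\Lambda(D)^{-1}$ really sends the maximal-domain operator on $X$ to the maximal-domain operator on $L^p(\R^n;\C^N)$ (this uses the boundedness of $\Lambda(D)$ and $\Lambda(D)^{-1}$ between the appropriate product Bessel-potential spaces, together with the closedness statement of Lemma~\ref{2.6}), and (b) keeping track of the symbol class of $\tilde a$ so that Theorem~\ref{2.7} is indeed applicable. Both are routine once phrased this way.
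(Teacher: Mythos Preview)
Your proposal is correct and follows essentially the same route as the paper: conjugate by the Bessel-potential isomorphism $\Lambda(D)\colon X\to L^p(\R^n;\C^N)$, apply Theorem~\ref{2.7} to the transformed symbol $\tilde a=\Lambda a\Lambda^{-1}$, and use the identity $e^{t\tilde a}=\Lambda e^{ta}\Lambda^{-1}$; the shift-invariance for (ii)$\Leftrightarrow$(iii) via the scalar factor $\langle\xi\rangle^s$ is exactly what the paper does as well. Your additional remarks on quasi-hyperbolicity of $\tilde a$ and preservation of maximal domains are not spelled out in the paper but are indeed the only minor points to check.
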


\begin{proof}
  By definition, $\op[\Lambda]\colon X\to L^p(\R^n;\C^N)$ is an isometric isomorphism. Therefore, a function $u\in C^1([0,\infty),X)$ is a classical solution to \eqref{2-3} with the initial value $u_0\in D(A)$ if and only if $\tilde u:= \op[\Lambda]u\in C^1([0,\infty),L^p(\R^n;\C^N))$ is a classical solution of
  \begin{equation}
    \label{2-8}
    \begin{aligned}
      \op[\Lambda^{-1}] \partial_t \tilde u - \op[a] \op[\Lambda^{-1}] \tilde u & = 0 \quad (t>0),\\
      \tilde u(0) & = \tilde u_0
    \end{aligned}
  \end{equation}
  with $\tilde u_0:= \op[\Lambda]u_0$. Also the continuous dependence on the initial value (inequality \eqref{2-4}) is maintained. Applying $\op[\Lambda]$ to the first line in \eqref{2-8}, we see that \eqref{2-3} is well-posed in $X$ if and only if
  \begin{equation}
    \label{2-9}
    \begin{aligned}
      \partial_t \tilde u - \op[\tilde a]\tilde u & = 0\quad (t>0),\\
      \tilde u(0) & = \tilde u_0
    \end{aligned}
  \end{equation}
  is well-posed in $L^p(\R^n;\C^N)$, where $\tilde a:= \Lambda a\Lambda^{-1}$. Now Theorem~\ref{2.7} yields the equivalence of (i) and (ii) if we take into account that
  $\Lambda(\xi) \exp(ta(\xi))\Lambda^{-1}(\xi) = \exp\big[t \Lambda(\xi)a(\xi)\Lambda^{-1}(\xi)\big]$.

  As we have
   \begin{align*}
    \diag\big(\langle\xi\rangle^{s_1+s}, & \ldots,\langle\xi\rangle^{s_N+s}\big)a(\xi)\diag\big(\langle\xi\rangle^{-s_1-s},\ldots,\langle\xi\rangle^{-s_N-s}\big)\\
   &=\diag\big(\langle\xi\rangle^{s_1},\ldots,\langle\xi\rangle^{s_N}\big)a(\xi)\diag\big(\langle\xi\rangle^{-s_1},\ldots,\langle\xi\rangle^{-s_N}\big),
 \end{align*}
 condition in (ii) holds for $s_1,\dots,s_n$ if and only if it holds for $s_1+s,\dots,s_n+s$ for any $s\in\R$. This gives the equivalence of condition (iii) to (i) and (ii).
\end{proof}

\begin{remark}\label{2.11}
a)   For $s\in\R$, $p\in (1,\infty)$, and $q\in[1,\infty]$, let $B_{pq}^s(\R^n)$ denote the standard Besov space. Then, if the conditions of Corollary~\ref{2.9} are satisfied, the Cauchy problem is well-posed in the space $\prod_{j=1}^N B_{pq}^{s_j}(\R^n)$. This follows by real interpolation, as, e.g., $B_{pq}^s(\R^n) = (H_p^{s-1}(\R^n), H_p^{s+1}(\R^n))_{1/2,q}$. In particular, we get well-posedness in the Sobolev-Slobodecki\u{\i} spaces $W_p^s(\R^n) = B_{pp}^s(\R^n)$, $s\not\in\Z$.

b) In the situation of Corollary~\ref{2.9}, consider the perturbed Cauchy problem
\begin{equation}
  \label{2-10}
  \begin{aligned}
    \partial_t u - \op[a] u - \op[b] u & = 0\quad (t>0),\\
    u(0) & = u_0.
  \end{aligned}
\end{equation}
If $a$ satisfies \eqref{2-11} and if $b\colon\R^n\to \C^{N\times N}$ is a function satisfying  $\tilde b:=\Lambda b\Lambda^{-1}\in M_p^N$, then \eqref{2-10} is well-posed in $X=\prod_{j=1}^N H_p^{s_j}(\R^n)$. This follows from the fact that $\op[\tilde a]+\op[\tilde b]$ is a bounded perturbation of $\op[\tilde a]$ in \eqref{2-9}, and the set of generators of $C_0$-semigroups is stable under bounded perturbations (see \cite{arendt-batty-hieber-neubrander11}, Corollary~3.5.6).
\end{remark}

\section{Multipliers and mixed-order systems in $L^p$-spaces}

In this section, we want to investigate in which cases condition \eqref{2-5} from Theorem~\ref{2.7} can hold. We will consider systems of classical (polyhomogeneous) pseudo-differential operators with constant ($x$-independent) coefficients. Therefore, we start with the definition of homogeneity.

\begin{definition}
  \label{3.1} Let $d\in\R$. A function $a\in C(\R^n\setminus\{0\}, \C^{N\times N})$ is called homogeneous of degree $d$ if there exists an $R>0$ such that
  \begin{equation}\label{3-1}
    f(t\xi)=t^d f(\xi)
  \end{equation}
   holds for all $\xi\in\R^n$ with $|\xi|\ge R$ and all $t>1$. If this equality holds for all $\xi\not=0$ and all $t>0$, then $a$ is called strictly homogeneous.

   Let $R>0$, and let $V$ be an open subset of the unit sphere $S^{n-1}:=\{\eta\in\R^n: |\eta|=1\}$.    If \eqref{3-1} holds for all $t>1$ and all $\xi$ in a truncated cone of the form
   \[ S_{R,V} := \{ r\eta: r>R,\, \eta \in V\},\]
    then $a$ is called homogeneous in $S_{R,V}$.
\end{definition}

\begin{remark}
  \label{3.2}
  If $a\in C^{[n/2]+1}(\R^n\setminus\{0\},\C^{N\times N})$ is strictly homogeneous of degree $0$, then every derivative of order $k$ is strictly homogeneous of degree $-k$. Therefore,  $a\in M_p^N$ by the theorem of Mikhlin (see, e.g., \cite{arendt-batty-hieber-neubrander11}, Theorem~E.3).
\end{remark}

We want to compare the multiplier properties of a matrix-valued function and the eigenvalues of the matrix. For this, we start with two remarks on the smoothness of eigenvalues and eigenvectors which should be well known but which we could not find in literature.

\begin{lemma}
  \label{3.3}
  Let $U\subset\R^n$ be open and non-empty, and let $a\in C^\infty(U,\C^{N\times N})$. Then there exists an open non-empty set $\tilde U\subset U$ such that (with appropriate numbering)  the eigenvalues $\lambda_1(\xi),\dots, \lambda_N(\xi)$ of $a(\xi)$ satisfy $\lambda_1,\dots,\lambda_N\in C^\infty(\tilde U)$.
\end{lemma}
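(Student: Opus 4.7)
My plan is to prove, by induction on $N$, a slightly more general claim about polynomials: for every monic
\[
  p(\xi,\lambda) = \lambda^N + c_{N-1}(\xi)\lambda^{N-1} + \cdots + c_0(\xi)
\]
with coefficients $c_j\in C^\infty(U)$, there exists an open non-empty $\tilde U\subset U$ on which the $N$ roots of $p(\xi,\cdot)$, listed with multiplicity, are smooth in $\xi$. The lemma then follows by specializing to $p(\xi,\lambda) = \det(\lambda I - a(\xi))$, whose roots are the eigenvalues of $a(\xi)$ with algebraic multiplicity. The base case $N=1$ is trivial since $\lambda_1 = -c_0\in C^\infty(U)$.

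For the inductive step I distinguish two cases according to the discriminant $\Delta(\xi) := \operatorname{disc}_\lambda p(\xi,\cdot)\in C^\infty(U)$. If $\Delta$ does not vanish identically on $U$, pick $\xi_0\in U$ with $\Delta(\xi_0)\neq 0$ and a neighborhood $\tilde U$ on which $\Delta\neq 0$; then $p(\xi,\cdot)$ has $N$ pairwise distinct simple roots throughout $\tilde U$. At every root $\lambda_j^{(0)}$ of $p(\xi_0,\cdot)$ we have $\partial_\lambda p(\xi_0,\lambda_j^{(0)})\neq 0$, so the implicit function theorem yields a smooth local solution $\lambda_j(\xi)$ of $p(\xi,\lambda_j(\xi))=0$ near $\xi_0$. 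By continuity these $N$ functions stay pairwise distinct on a possibly smaller $\tilde U$, hence exhaust the roots, producing $\lambda_1,\dots,\lambda_N\in C^\infty(\tilde U)$.

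Suppose instead that $\Delta\equiv 0$ on $U$, so that $g(\xi,\lambda) := \gcd_\lambda(p(\xi,\cdot),\partial_\lambda p(\xi,\cdot))$ has degree $d(\xi)\geq 1$ for every $\xi\in U$. The degree $d(\xi)$ is $2N-1$ minus the rank of the Sylvester matrix $S(\xi)$ of $(p,\partial_\lambda p)$, whose entries are smooth polynomials in the $c_j(\xi)$; since matrix rank is lower semi-continuous, $d$ is upper semi-continuous, and the locus $\tilde U_0 := \{\xi\in U : d(\xi)=d_{\min}\}$ is open and non-empty. On $\tilde U_0$ the subresultant construction (Cramer's rule applied to suitable submatrices of $S(\xi)$) produces explicit smooth formulas for the normalized coefficients of $g(\xi,\cdot)$, and polynomial division then gives the squarefree part $q(\xi,\lambda) := p/g$, again with smooth coefficients on $\tilde U_0$, of degree $N-d_{\min}<N$ and with nonzero discriminant on $\tilde U_0$. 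Since $\deg g = d_{\min}<N$ as well, the inductive hypothesis applied to $g$ and the Case A argument applied to the squarefree $q$ together yield an open subset $\tilde U\subset\tilde U_0$ on which all roots of both $g$ and $q$ are smooth. Concatenating these two lists with multiplicity gives the $N$ roots of $p = q\cdot g$ as smooth functions on $\tilde U$, completing the induction. The main obstacle is precisely the smooth parameter dependence of the gcd in the degenerate case $\Delta\equiv 0$, which is resolved by first restricting to the open locus of minimal $\deg g$ and then invoking the subresultant formulas to produce smooth coefficients for $g$ and $q$.
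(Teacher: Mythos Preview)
Your proof is correct and takes a genuinely different route from the paper's. Both arguments proceed by induction on $N$ at the level of monic polynomials with smooth coefficients, but the inductive step is organized differently. The paper applies the inductive hypothesis to the \emph{derivative} $\partial_\lambda p$ (via its companion matrix) to obtain smooth roots $\tau_1,\dots,\tau_{N-1}$ on some open $U_0$, invokes a cited result to take the eigenvalues $\lambda_j$ continuous, and then runs a Baire-category argument: either $\lambda_1$ avoids all the $\tau_j$ on an open set (so it is a simple root of $p$ and the implicit function theorem applies), or the closed sets $\{\lambda_1=\tau_j\}$ cover a ball and one of them has nonempty interior, on which $\lambda_1=\tau_j$ is smooth. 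Your argument instead factors $p = q\cdot g$ with $g=\gcd(p,\partial_\lambda p)$ and $q$ the squarefree part, uses the subresultant construction to produce smooth coefficients for $g$ (and hence for $q$ by division) on the open locus where $\deg g$ is minimal, and then handles the squarefree $q$ by your Case~A and the lower-degree $g$ by the inductive hypothesis; concatenating root lists recovers the roots of $p$ with the correct multiplicities. Your approach is more algebraic and self-contained---it avoids the external continuity-of-roots citation and the Baire argument---at the price of invoking the subresultant machinery; the paper's approach uses only the implicit function theorem and elementary topology but leans on the quoted continuity result.
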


\begin{proof}
  We prove the statement by induction on $N\in\N$, the case $N=1$ being trivial. For $\xi\in U$, let $p(\lambda,\xi) := \det(\lambda-a(\xi)) = \prod_{j=1}^N(\lambda-\lambda_j(\xi))$. We may assume that the numbering of the eigenvalues is chosen such that all $\lambda_j$ are continuous (see \cite{rahman-schmeisser02}, Chapter 1.3). By induction, there exists an open non-empty set $U_0\subset U$ and $\tau_1,\dots,\tau_{N-1}\in C^\infty(U_0)$ such that $\partial_\lambda p(\lambda,\xi) = N\prod_{j=1}^{N-1} (\lambda-\tau_j(\xi))$. (Note that for the induction we formally  have to write the zeros of the polynomial  $\partial_\lambda p(\lambda,\cdot)$ as the eigenvalues of its companion  matrix.) By continuity of $\lambda_1,\tau_1,\dots,\tau_{N-1}$, the set
 \[
  W:=\bigcap_{j=1}^{N-1}\{\xi\in U_0:\,\lambda_1(\xi)\neq \tau_j(\xi)\}
 \]
is open. Moreover, the implicit function theorem yields  $\lambda_1\vert_W\in C^{\infty}(W)$. Therefore, if $W\neq\emptyset$ we define $U_1:=W$. If $W=\emptyset$, then we choose $A\subset U_0$ as the closure of a non-void open ball contained in $U_0$. Again by continuity it follows that the sets
\[
 A_j:=\{\xi\in A:\, \lambda_1(\xi)=\tau_j(\xi)\}
\]
for $j=1,\ldots,N-1$ are closed. Moreover, since $W=\emptyset$ we have that that
\[
 A=U_0\cap A=\bigcup_{j=1}^{N-1}A_j.
\]
Therefore, there exits a $j\in\{1,\ldots,N-1\}$ such that $A_j$ contains a non-void open ball $U_1$ (this can be seen, e.g., by an application of the Baire category theorem). As $\lambda_1\vert_{U_1}=\tau_j\vert_{U_1}$, we have  $\lambda_1\in C^{\infty}(U_1)$.

Replacing now $U$ by $U_1$ and repeating the same procedure for $\lambda_2,\ldots,\lambda_N$, we obtain open sets $U_0\supset U_1\supset\ldots\supset U_N\neq\emptyset$ with $\lambda_\ell\in C^{\infty}(U_\ell)$. In particular, we have $\lambda_1,\ldots \lambda_N\in C^{\infty}(U_N)$. Setting $\tilde U:= U_N$, we obtain the statement.
\end{proof}

\begin{lemma}
  \label{3.4}
  Let $U\subset\R^n$ be open and non-empty, and let $a\in C^\infty(U;\C^{N\times N})$ and $\lambda\in C^\infty(U)$ such that $\lambda(\xi)$ is an eigenvalue of $a(\xi)$ for each $\xi\in U$. Then there exists an open non-empty set $\tilde U\subset U$ and a function $v\in C^\infty(\tilde U;\C^N)$ such that $v(\xi)$ is an eigenvector to the eigenvalue $\lambda(\xi)$.
\end{lemma}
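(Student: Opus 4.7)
The plan is to exploit the fact that eigenvectors can be recovered from the kernel of $a(\xi) - \lambda(\xi) I$ by Cramer's rule, provided the rank is locally constant. Set $M(\xi) := a(\xi) - \lambda(\xi) I_N$. Then $M \in C^\infty(U; \C^{N\times N})$ and, since $\lambda(\xi)$ is an eigenvalue, $\rank M(\xi) \le N-1$ for every $\xi \in U$.

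First I would shrink $U$ so that the rank of $M$ becomes constant. The function $\xi \mapsto \rank M(\xi)$ is lower semi-continuous on $U$ because $\rank M(\xi) \ge k$ is equivalent to the non-vanishing of at least one $k \times k$ minor of $M(\xi)$, and minors are continuous. Let $r := \max_{\xi \in U} \rank M(\xi)$. Then the set $U_0 := \{\xi \in U : \rank M(\xi) = r\} = \{\xi \in U : \rank M(\xi) \ge r\}$ is open and non-empty, and $0 \le r \le N-1$. Fix $\xi_0 \in U_0$ and choose row indices $I = \{i_1,\dots,i_r\}$ and column indices $J = \{j_1,\dots,j_r\}$ such that the $r \times r$ submatrix $B(\xi) := (M_{i_\ell,j_m}(\xi))_{\ell,m=1}^{r}$ satisfies $\det B(\xi_0) \ne 0$. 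By continuity there is an open neighborhood $\tilde U \subset U_0$ of $\xi_0$ on which $\det B(\xi) \ne 0$.

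Next I would construct $v$ on $\tilde U$ by Cramer's rule. Since $r \le N-1$, there exists an index $k \in \{1,\dots,N\} \setminus J$. I set $v_k(\xi) := 1$, $v_l(\xi) := 0$ for $l \notin J \cup \{k\}$, and solve the $r$-dimensional linear system arising from the rows of $M$ indexed by $I$,
\[
\sum_{m=1}^{r} M_{i_\ell,j_m}(\xi)\, v_{j_m}(\xi) = -M_{i_\ell,k}(\xi), \quad \ell = 1,\dots,r,
\]
which in matrix form reads $B(\xi)(v_{j_m}(\xi))_{m=1}^{r} = -(M_{i_\ell,k}(\xi))_{\ell=1}^{r}$. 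Since $B(\xi)$ is invertible on $\tilde U$ with smooth entries, $B(\xi)^{-1}$ has smooth entries by the cofactor formula, and hence $v_{j_1},\dots,v_{j_r} \in C^\infty(\tilde U)$. By construction $v_k \equiv 1$, so $v(\xi) \ne 0$ everywhere on $\tilde U$.

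It remains to check that the other rows of $M(\xi)v(\xi)$ vanish as well; this is the only place where constant rank enters essentially. On $\tilde U$ the submatrix $B(\xi)$ is invertible, so the rows of $M(\xi)$ indexed by $I$ are linearly independent. Since $\rank M(\xi) = r$ throughout $\tilde U$, these $r$ rows form a basis of the row space of $M(\xi)$, so every other row is a (pointwise) linear combination of them. A vector that annihilates the rows indexed by $I$ therefore annihilates all rows, so $M(\xi) v(\xi) = 0$, i.e.\ $v(\xi)$ is an eigenvector of $a(\xi)$ for the eigenvalue $\lambda(\xi)$. The main obstacle is precisely the passage from ``the chosen $r$ rows annihilate $v$'' to ``$Mv = 0$'', and this is exactly resolved by the constant rank reduction in the first step.
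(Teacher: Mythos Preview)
Your proof is correct and follows essentially the same approach as the paper's: both restrict to an open set where the rank of $a(\xi)-\lambda(\xi)I_N$ is maximal (hence locally constant), locate an invertible $r\times r$ submatrix, and read off a smooth kernel vector from it. The only cosmetic difference is that the paper organizes the linear algebra via a block decomposition and column dependence (writing the last $N-k$ columns as smooth combinations of the first $k$ and taking $v=\binom{c e_1}{-e_1}$), whereas you use the dual formulation via row dependence and solve the $I$-indexed rows for the $J$-indexed entries of $v$; both arguments hinge on the same constant-rank reduction.
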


\begin{proof}
  We define $b\in C^\infty(U;\C^{N\times N})$ by $b(\xi):= a(\xi)-\lambda(\xi)I_N$, so we have to consider the kernel of $b$. We set  $k:=\max_{\xi\in U} \rank b(\xi)<N $ and choose $\xi_0\in U$ with $\rank b(\xi_0)=k$. Without loss of generality, we may assume that the left upper $k\times k$ corner of $b(\xi_0)$ is invertible. Accordingly, we write
  \[ b(\xi) = \begin{pmatrix}
    b^{(1,1)}(\xi) & b^{(1,2)}(\xi)\\ b^{(2,1)}(\xi) & b^{(2,2)}(\xi)
  \end{pmatrix}\]
  with $b^{(1,1)}\in C^\infty(U;\C^{k\times k})$, $b^{(1,2)}\in C^\infty(U;C^{k\times (N-k)})$, $b^{(2,1)}\in C^\infty(U;\C^{(N-k)\times k})$ and $b^{(2,2)}\in C^\infty(U;\C^{(N-k)\times (N-k)})$.
   By continuity, there exists an open ball $\tilde U\subset U$ such that $b^{(1,1)}(\xi)$ is invertible for all $\xi\in\tilde U$. By the definition of $k$, for all $\xi\in \tilde U$ the last $N-k$ columns are linear combinations of the first $k$ columns. Therefore, we obtain
   \[ b(\xi) = \begin{pmatrix}
    b^{(1,1)}(\xi) & b^{(1,1)}(\xi) c(\xi)\\ b^{(2,1)}(\xi) & b^{(2,1)}(\xi)c(\xi)
  \end{pmatrix}\quad (\xi\in\tilde U),\]
  where $c(\xi) := (b^{(1,1)}(\xi))^{-1} b^{(1,2)}(\xi)$. Note that $c\in C^\infty(\tilde U;\C^{k\times (N-k)})$. Let $e_1$ be the first unit vector in $\C^{N-k}$, and set $v(\xi):= \binom{c(\xi)e_1}{-e_1}\in\C^N$. Then $v(\xi)$ is an eigenvector of $a(\xi)$ to the eigenvalue $\lambda(\xi)$ for all $\xi\in\tilde U$ and depends smoothly on $\xi$.
\end{proof}

The following definition is essentially taken from \cite{brenner66}, p.~30.

\begin{definition}
  \label{3.5}
Let $U\subset\R^n$ be open, and let $m\in L^\infty(U;\C^{N\times N})$. Then $m$ is called a local $L^p$-Fourier multiplier in $U$ if there exists  $\tilde m\in M_p^N$ such that $\tilde m|_U = m$. The space of all such functions will be denoted by $M_p^N(U)$. For $m\in M_p^N(U)$, we define $\|u\|_{M_p^N(U)}$ as the supremum over all $\|\op[\tilde m]f\|_{L^p(\R^n;\C^N)}$ where $f\in\mathscr S(\R^n;\C^N)$ with $\supp \mathscr Ff\subset U$ and $\|f\|_{L^p(\R^n;\C^N)}\le 1$ and where $\tilde m\in M_p^N$ with $\tilde m|_U = m$.
\end{definition}

\begin{remark}\label{3.6}
  a) Let $a\in C^\infty(\R^n\setminus\{0\},\C^{N\times N})$. If $a$ is homogeneous of degree $d\le 0$, then $a$ is a local $L^p$-Fourier multiplier in $U:=\{\xi\in\R^n: |\xi|>\epsilon\}$ for all $\epsilon>0$ by the theorem of Mikhlin, applied to a smooth extension of $a|_U$. Similarly, if $a$ is strictly homogeneous of degree $d\ge 0$, then $a$ is a local $L^p$-Fourier multiplier in $\{\xi\in\R^n\setminus\{0\}: |\xi| < R\}$ for all $R >0$.

  b) Let $R>0$ and let $V\subset S^{n-1}$ be open. If $a\in C^\infty(S_{R,V},\C^{N\times N})$ is homogeneous in $S_{R,V}$ of degree $d\in\R$, then there exists an open subset $\tilde V\subset V$ such that the eigenvalues and eigenvectors of $a$  are smooth and homogeneous in $S_{R,\tilde V}$ of degree $d$. In fact, by Lemma~3.3 and 3.4, there exists an open set $U_0\subset S_{R,V}$ where the eigenvalues and eigenvectors are smooth. We choose $r_0>R$ such that $V_0:= r_0S^{n-1}\cap U_0\not=\emptyset$ and set $\tilde V:= r_0^{-1} V_0 \subset S^{n-1}$. Then we can extend the eigenvalues and eigenvectors by homogeneity from $V_0$ to $S_{R,\tilde V}$.
\end{remark}

\begin{theorem}
  \label{3.7}
  In the situation of Theorem~\ref{2.7}, assume that $a\in S^\mu(\R^n,\C^{N\times N})$ is homogeneous of degree $\mu\in\R$.

  a) If $\mu\le 0$, then the Cauchy problem \eqref{2-3} is well-posed in $L^p(\R^n,\C^N)$ for all $p\in (1,\infty)$.

  b) Let $\mu >0$, and assume that for sufficiently large $\xi\in\R^n$ all eigenvalues of $a(\xi)$ have negative real part. Then the Cauchy problem \eqref{2-3} is well-posed for all $p\in (1,\infty)$.

  c) Let $\mu>0$, and assume that there exists an eigenvalue $\lambda(\xi)$ of $a(\xi)$ with $\lambda(\xi)\in i\R\setminus\{0\}$ for all $\xi\in S_{R,V}$ for some $R>0$ and some open set $\emptyset \not=V\subset S^{n-1}$. If \eqref{2-3} is well-posed for some $p\not=2$, then $\mu=1$.
\end{theorem}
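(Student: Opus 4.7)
The plan is to reduce the matrix-multiplier bound of Theorem~\ref{2.7} to a scalar oscillatory multiplier bound depending only on $\lambda$, then exploit dilation invariance of the $M_p$-norm together with the homogeneity of $\lambda$ to upgrade the bound on $t\in[0,T]$ into one uniform in a rescaled parameter $s\in[0,\infty)$, and finally derive a contradiction from the classical oscillatory-multiplier lower bound unless $\mu=1$. By Remark~\ref{3.6}~b) together with Lemma~\ref{3.4}, after shrinking $V$ to an open subcone we may assume that $\lambda$ and some right eigenvector $v$ of $a$ are smooth and strictly homogeneous of degrees $\mu$ and $0$ on the full open cone $\{\xi\ne 0:\xi/|\xi|\in\tilde V\}$. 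Write $\lambda=i\Psi$ with $\Psi\in C^\infty$ real, non-vanishing, homogeneous of degree $\mu$. From $e^{ta(\xi)}v(\xi)=e^{it\Psi(\xi)}v(\xi)$ and any constant vector $\tilde v\in\C^N$,
\[\tilde v^\top e^{ta(\xi)}v(\xi) = e^{it\Psi(\xi)}\bigl(\tilde v^\top v(\xi)\bigr).\]
The left-hand side arises from $e^{ta(\cdot)}\in M_p^N$ (Theorem~\ref{2.7}) by composition with the constant covector $\tilde v^\top$ and a Mikhlin multiplier obtained from $v$ (after inserting a smooth conic cutoff that handles the origin and localizes to the cone, using Remark~\ref{3.2}), so it lies in $M_p$ uniformly in $t\in[0,T]$. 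Choosing $\tilde v$ so that $\tilde v^\top v(\xi_0)\ne 0$ at some $\xi_0$ in the cone, and dividing by a Mikhlin-cutoff version of $\tilde v^\top v$, we obtain a smooth cutoff $\chi\in C^\infty(\R^n)$, supported in and bounded below on some truncated subcone $S_{R',W}$, with
\begin{equation}\label{plan-star}
\|\chi\cdot e^{it\Psi}\|_{M_p}\le C,\qquad t\in[0,T].
\end{equation}

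Dilation invariance of the $M_p$-norm and the identity $\Psi(\xi/r)=r^{-\mu}\Psi(\xi)$ on $\supp\chi(\cdot/r)\subset S_{rR',W}$ transform \eqref{plan-star} into
\[\bigl\|\chi(\cdot/r)\,e^{is\Psi}\bigr\|_{M_p}\le C, \qquad r>0,\ s=tr^{-\mu},\ t\in(0,T],\]
with $s$ ranging over all of $(0,\infty)$ since $\mu>0$. For any fixed $f\in\ms S(\R^n)$ with $\supp\hat f$ compact, contained in the $W$-direction cone and bounded away from $0$, the factor $\chi(\xi/r)$ agrees on $\supp\hat f$ with its degree-$0$ asymptotic value $\psi(\xi/|\xi|)$ once $r$ is small enough; hence $\|\op[\psi(\cdot/|\cdot|)\,e^{is\Psi}]f\|_{L^p}\le C\|f\|_{L^p}$ uniformly in $s\ge 0$ on a dense subset of $L^p$.

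Suppose for contradiction that $\mu\ne 1$. Euler's identity $\operatorname{Hess}\Psi\cdot\xi=(\mu-1)\nabla\Psi$, combined with $\mu\ne 1$, forbids $\operatorname{Hess}\Psi\equiv 0$ on any open subset of the cone (such vanishing would force $\nabla\Psi\equiv 0$ and hence $\mu\Psi\equiv 0$, contradicting $\lambda\ne 0$); so at some $\xi_0$ in the cone $\rank\operatorname{Hess}\Psi(\xi_0)=\rho\ge 1$. Localizing $\psi$ in a conic neighborhood of $\xi_0/|\xi_0|$ where the Hessian keeps rank $\rho$, the classical stationary-phase lower bound for oscillatory Fourier multipliers with non-degenerate phase (H\"ormander~\cite{hoermander60}, Thm.~1.11; Brenner~\cite{brenner66}, applied after restriction to a $\rho$-dimensional affine slice on which the restricted Hessian is non-degenerate) yields Gaussian-like test functions $f_s$ with Fourier support inside our cone such that
\[ \|\op[\psi(\cdot/|\cdot|)\,e^{is\Psi}]f_s\|_{L^p}\ge c\,s^{\rho|1/2-1/p|}\|f_s\|_{L^p}\longrightarrow\infty\qquad(s\to\infty,\ p\ne 2),\]
contradicting the previous uniform bound. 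Hence $\mu=1$. The hardest step is this last one: implementing the oscillatory-multiplier lower bound in the cone-localized, possibly rank-deficient regime requires test functions adapted simultaneously to the direction of the eigenvalue cone and to the axes of non-degeneracy of $\operatorname{Hess}\Psi$; the slicing to a $\rho$-dimensional subspace on which the Hessian is non-degenerate is what reduces this to the classical full-rank stationary-phase estimate.
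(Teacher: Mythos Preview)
Your argument for part c) is essentially correct and parallels the paper's, but you take a more hands-on route at the final step. Both proofs do the same three things: (i) pass from the matrix multiplier $e^{ta(\cdot)}$ to a scalar one $e^{it\Psi}$ by pairing against a smooth eigenvector; (ii) use dilation invariance of $M_p$ together with homogeneity to turn the bound on $t\in[0,T]$ into a bound uniform in a rescaled parameter; (iii) conclude that $\Psi$ must be affine, whence $\mu=1$ by homogeneity. The paper carries out (i) via Brenner's Lemma~4 (your $\tilde v^\top\cdot v$ composition is exactly the same mechanism) and (ii) by rescaling $e^{ka(\xi)}=e^{a(k^{1/\mu}\xi)}$ directly on the matrix symbol, while you rescale the scalar symbol; these are equivalent. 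The real divergence is at (iii): the paper simply invokes Brenner's Lemma~5, which says that a uniform local $M_p$ bound on $(e^{ik\lambda})_{k\in\N}$ with $|\lambda|=1$ forces $\lambda$ to be affine. You instead argue that $\mu\neq 1$ forces $\operatorname{Hess}\Psi\not\equiv 0$ via Euler's relation, and then seek a stationary-phase lower bound. This is morally what underlies Brenner's lemma, so your approach is sound, but it is longer and your citations are loose: the precise growth $s^{\rho|1/2-1/p|}$ with rank-deficient Hessian is not stated in H\"ormander~\cite{hoermander60}, Thm.~1.11, and the slicing to an affine $\rho$-plane needs de~Leeuw's restriction theorem (not cited) rather than Brenner.

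Two small corrections. First, the set of $f$ with $\operatorname{supp}\hat f$ compact and contained in a fixed proper cone is \emph{not} dense in $L^p$; but you do not need density, only that your lower-bound test functions $f_s$ also have Fourier support in that cone, which they do. Second, for your limit $\chi(\xi/r)\to\psi(\xi/|\xi|)$ to hold \emph{exactly} on $\operatorname{supp}\hat f$ for small $r$, you need $\chi(\xi)=\psi(\xi/|\xi|)$ for $|\xi|$ large (not merely convergence); this is easily arranged but should be said. Finally, you do not address parts a) and b); the paper dispatches them in one line each (bounded generator for $\mu\le 0$; parameter-ellipticity and analytic semigroup for the negative-real-part case).
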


\begin{proof}
  a) It is well known that $A=\op[a]$ is a bounded operator in $L^p(\R^n,\C^N)$ for all $p\in (1,\infty)$ if $a\in S^0(\R^n,\C^{N\times N})$, see, e.g., \cite{wong99}, Theorem~10.7. Therefore, $A$ generates a $C_0$-semigroup in $L^p(\R^n,\C^N)$.

  b) Under the assumption b), the symbol $a$ satisfies the classical condition of parameter-ellipticity, i.e. we have
  \[ \det (\lambda-a(\xi))\not=0\quad (\Re\lambda\ge 0,\, |\xi|\ge R)\]
  for sufficiently large $R>0$. Therefore, the operator $A$ even generates a holomorphic semigroup and \eqref{2-3} is well-posed in $L^p(\R^n,\C^N)$ (see, e.g., \cite{grubb95}, Theorem~1.7 and Theorem~2.3).

  c) We may assume that $a$ is homogeneous in $S_{R,V}$. Moreover, by Remark~\ref{3.6} b), we may also assume that $\lambda\in C^\infty(S_{R,V})$, that we have an eigenvector $v\in C^\infty(S_{R,V},\C^N)$ and that both $\lambda$ and $v$ are homogeneous in $S_{R,V}$ of degree $\mu$.

  Let $\emptyset\not= U\subset S_{R,V}$ be an open ball. As $v(\xi)\not=0$ for all $\xi\in U$, we can apply \cite{brenner66}, Lemma~4, which yields that there exists an open $\emptyset \not= U_0\subset U$ such that for all $f\in \mathscr S(\R^n)$ with $\supp \hat f\subset U_0$ we have
  \begin{equation}
    \label{3-2}
    \|f\|_{L^p(\R^n)} \le C \|\op[v] f\|_{L^p(\R^n,\C^N)}.
  \end{equation}
  Here, $\op[v]f := (\op[v_1]f,\dots,\op[v_n]f)^\top$ for $v=(v_1,\dots,v_n)^\top$.
  Let $k\in\N$. We choose $f\in \mathscr S(\R^n)$ with $\supp \hat f\subset U_0$. We apply \eqref{3-2} to $\op[e^{k\lambda(\cdot)}]f$ instead of $f$ and obtain
  \begin{align*}
    \big\| \op\big[ & e^{k\lambda(\cdot)}\big] f\big\|_{L^p(\R^n)} \le C \big\|\op\big[ e^{k\lambda(\cdot)}v(\cdot)\big] f\big\|_{L^p(\R^n,\C^N)}\\
    & = C \big\|\op\big[ e^{ka(\cdot)} v(\cdot)\big] f\big\|_{L^p(\R^n,\C^N)}\\
    & = C \big\|\op\big[ e^{a(k^{1/\mu}\,\cdot\,)} v(\cdot)\big] f\big\|_{L^p(\R^n,\C^N)}.
  \end{align*}
  For the last equality, we used the homogeneity of $a$ in $S_{R,V}$ and $\supp\hat f\subset S_{R,V}$ (here we also used $\mu>0$). Let $v_0\in \mathscr D(\R^n)$ be an extension of $v|_{U_0}$. From the elementary fact that
  \[ \big\| e^{a(k^{1/\mu}\,\cdot\,)}\big\|_{M_p^N} = \big\| e^{a(\cdot)}\big\|_{M_p^N}\]
  (see \cite{arendt-batty-hieber-neubrander11}, Proposition~E.2 e)), we obtain
  \begin{align*}
    \big\|   \op\big[  & e^{k\lambda(\cdot)}\big] f\big\|_{L^p(\R^n)} \\
     & \le C \big\| \op\big[e^{a(k^{1/\mu}\,\cdot\,)}\big]\big\|_{L(L^p(\R^n;\C^N))} \| \op[v_0]\|_{L(L^p(\R^n), L^p(\R^n;\C^N))} \| f\|_{L^p(\R^n)}\\
    & \le C \big\| e^{a(\cdot)}\big\|_{M_p^N} \big\|\op[v_0]\big\|_{L(L^p(\R^n), L^p(\R^n;\C^N))} \| f\|_{L^p(\R^n)}\\
    & \le C' \| f\|_{L^p(\R^n)}
  \end{align*}
  with a constant $C'$ depending on $a$ and $v_0$ but not on $f$ or $k$. Note that we have $\lambda(\xi)\in i\R$ and therefore $|e^{\lambda(\xi)}|=1$ for $\xi\in U_0$. Thus, we may apply \cite{brenner66}, Lemma~5, to get
  \begin{equation}
    \label{3-3}
    \lambda(\xi) = i \xi_0^\top \xi + i\lambda_0\quad (\xi\in U_0)
  \end{equation}
  for some $\lambda_0\in\R$ and $\xi_0\in\R^n$. However, as $\lambda$ is homogeneous of degree $\mu>0$, we obtain $\lambda_0=0$ and $\xi_0\not=0$ as well as  $\mu=1$.
\end{proof}

\begin{remark}
  \label{3.8}
  a) The statement in c) also holds if $a\in C^\infty(\R^n\setminus\{0\},\C^{N\times N})$ is strictly homogeneous of degree $\mu>0$, as this is a bounded perturbation of some homogeneous symbol $\tilde a\in C^\infty(\R^n,\C^{N\times N})$ (cf. Remark~\ref{3.6} a)).

  b) Well-posedness is invariant under similarity transformations: Assume that $S\in L(L^p(\R^n;\C^N))$ is an isomorphism. Then the Cauchy problem is well-posed in $L^p(\R^n;\C^N)$ for $\op[a]$ if and only if it is well-posed for $S^{-1}\op[a]S$. This can be seen as in the proof of Corollary~\ref{2.9}. In particular, this holds if $S = \op[s]$ with $s,s^{-1}\in S^0(\R^n;\C^{N\times N})$.

  c) In the proof of Theorem~\ref{3.7} c), we have seen that well-posedness in $L^p$ implies a strong condition on the eigenvalues of $a(\xi)$: Assume in the situation of Theorem~\ref{3.7} that $\mu=1$. If there exists an eigenvalue $\lambda(\xi)\in i\R\setminus\{0\}$ which is of the form $\lambda(\xi) = \lambda^{(0)} |\xi|$ on an open nonempty set, then well-posedness in $L^p$, $p\not=2$, is only possible for $n=1$. In fact, we have seen above that the eigenvalues of $a(\xi)$ have the form \eqref{3-3}. Therefore,  $|\xi|\lambda^{(0)} = \xi_0^\top \xi+ i+\lambda_0 $ for all $\xi$ in a nonempty open set which is only possible for $n=1$. This situation occurs, in particular, if $a(\xi) = |\xi| a^{(0)}$ with a constant matrix $a^{(0)}\in\C^{N\times N}$ with at least one purely  imaginary eigenvalue.
\end{remark}

\begin{remark}
  \label{3.8a}
  In the situation of Theorem~\ref{3.7}, let us consider the particular case that $a$ is homogeneous of degree 1 and linear in $\xi$, i.e., $a(\xi)=\sum_{j=1}^n \xi_j a_j$ with $a_j\in\C^{N\times N}$. To our knowledge, there is no characterization of all matrices which lead to a well-posed problem, and we only state some remarks on this.

  a) For $p=2$ there is in fact a characterization of all matrices which lead to a well-posed problem, see (\cite{arendt-batty-hieber-neubrander11}, Proposition~8.4.2) and references therein.

  b) If $a_j=i\tilde a_j$ with symmetric real matrices $\tilde a_j\in\R^{N\times N}$, then \eqref{2-3} is well-posed in $L^p(\R^n;\C^N)$ if and only if all matrices $\tilde a_1,\dots, \tilde a_n$ commute. This is a classical result by Brenner (\cite{brenner66}, Theorem~1).

  c) If in the above situation all eigenvalues are purely imaginary, then there is a complete characterization of all matrices leading to a well-posed problem which was derived by Brenner in (\cite{brenner73}, Section~5).

  d) If all eigenvalues of $a(\xi)$ have negative real part for large $\xi\in\R^n$, then \eqref{2-3} is well-posed for all $p\in (1,\infty)$ by Theorem~\ref{3.7} b).

  e) If there exists a $j\in\{1,\dots,n\}$ and an eigenvalue $\lambda_0\in i\R$ of $a_j$ with nontrivial Jordan structure, then \eqref{2-3} is not well-posed even for $p=2$. In fact, setting $\xi = (0,\dots,\xi_j,\dots,0)^\top$, we see that the symbol $e^{ta(\xi)} = e^{t\xi_ja_j}$ is unbounded.

  f) Let $n=1$, and let $a(\xi)=\xi a_1$ with a diagonalizable matrix $a_1\in\C^{N\times N}$. Then \eqref{2-3} is well-posed in $L^p$, $p\not=2$, if and only if all eigenvalues of $a_1$ have nonpositive real part.
\end{remark}

The result of Theorem~3.7 extends to a system of classical pseudo-differential operators. Here a symbol $a\in S^\mu(\R^n,\C^{N\times N})$ belongs to the space $S^\mu_{\cl}(\R^n,\C^{N\times N})$ of classical (polyhomogeneous) symbols if there exists an asymptotic expansion $a\sim \sum_{j=0}^\infty a_j$ where $a_j\in S^{\mu-j}(\R^n,\C^{N\times N})$ is homogeneous of degree $\mu-j$. In this case, $a_0$ is called the principal symbol of $a$.

\begin{lemma}
  \label{3.9a} Let $a\in S^\mu_{\cl}(\R^n,\C^{N\times N})$ be quasi-hyperbolic. Then the statement of Theorem~\ref{3.7} c) hold analogously, where now the eigenvalues of the principal symbol $a_0$ have to be considered.
\end{lemma}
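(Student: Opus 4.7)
The plan is to reduce the classical case to Theorem~\ref{3.7}(c) applied to the principal symbol $a_0$. Since $a_0\in S^\mu(\R^n;\C^{N\times N})$ is homogeneous of degree $\mu$ in the sense of Definition~\ref{3.1} and, by hypothesis, has the eigenvalue $\lambda_0(\xi)\in i\R\setminus\{0\}$ on $S_{R,V}$, the lemma will follow once we prove that $a_0$ is quasi-hyperbolic and that $\op[a_0]$ is well-posed in $L^p$; then Theorem~\ref{3.7}(c) applied to $a_0$ forces $\mu=1$.

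For the well-posedness I would exploit a scaling argument based on the dilation invariance of the multiplier norm. By Theorem~\ref{2.7}, well-posedness of $\op[a]$ gives $\|e^{ta(\cdot)}\|_{M_p^N}\le C$ for $t\in[0,1]$, and by Proposition~E.2(e) in \cite{arendt-batty-hieber-neubrander11} we have $\|e^{ta(k\,\cdot\,)}\|_{M_p^N}\le C$ for every $k>0$ and $t\in[0,1]$. Fixing $s\ge 0$ and setting $t:=s/k^\mu$ (admissible once $k\ge s^{1/\mu}$), the asymptotic expansion $a\sim\sum_{j\ge 0}a_j$ with $a_j\in S^{\mu-j}$ homogeneous of degree $\mu-j$ gives, for each fixed $\xi\neq 0$ and $k$ sufficiently large,
\[
ta(k\xi) \;=\; sa_0(\xi) + \tfrac{s}{k}\,a_1(\xi) + O(s/k^2) \;\longrightarrow\; sa_0(\xi),
\]
with the corresponding smooth convergence uniform on compact subsets of $\R^n\setminus\{0\}$. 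For $f\in\mathscr S(\R^n;\C^N)$ with $\hat f\in C_c^\infty(\R^n\setminus\{0\};\C^N)$ this upgrades to $e^{ta(k\,\cdot\,)}\hat f\to e^{sa_0(\cdot)}\hat f$ in $\mathscr S$, hence $\op[e^{ta(k\,\cdot\,)}]f\to\op[e^{sa_0(\cdot)}]f$ in $L^p$; combined with the uniform bound one gets $\|\op[e^{sa_0(\cdot)}]f\|_{L^p}\le C\|f\|_{L^p}$. Density of such test functions in $L^p$ (for $p\in(1,\infty)$) together with the continuity of $\op[e^{sa_0(\cdot)}]$ on $\mathscr S$ (since $e^{sa_0(\cdot)}\in\mathcal O_M$) then extends this to $\|e^{sa_0(\cdot)}\|_{M_p^N}\le C$ uniformly in $s\ge 0$.

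For the quasi-hyperbolicity of $a_0$, the expansion $a(r\omega)=r^\mu a_0(\omega)+O(r^{\mu-1})$ for $\omega\in S^{n-1}$ together with continuity of the roots of the characteristic polynomial exhibits each eigenvalue $\lambda^{(0)}(\omega)$ of $a_0(\omega)$ as the leading part of an eigenvalue of $a(r\omega)$ as $r\to\infty$. The quasi-hyperbolicity of $a$ bounds the latter by $M_a$ in real part, so dividing by $r^\mu$ and letting $r\to\infty$ forces $\Re\lambda^{(0)}(\omega)\le 0$. Combined with homogeneity and boundedness of $a_0$ on bounded sets this gives the required quasi-hyperbolicity of $a_0$, and Theorem~\ref{2.7} yields the well-posedness of $\op[a_0]$ in $L^p$. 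Theorem~\ref{3.7}(c) applied to $a_0$ then produces $\mu=1$.

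The main technical obstacle in this plan is the passage to the limit in the scaling step: one must combine a uniform $M_p^N$ bound on the family $\{e^{ta(k\,\cdot\,)}\}_{k>0}$ with merely pointwise (although smooth) convergence of the symbols on $\R^n\setminus\{0\}$. Restricting to Fourier supports in a fixed compact subset of $\R^n\setminus\{0\}$ circumvents this: on such supports the convergence of symbols lifts, after multiplication by $\hat f$, to convergence in $\mathscr S$ and hence via $\mathscr F^{-1}$ to convergence in $L^p$; density of this class of test functions in $L^p$ then closes the argument.
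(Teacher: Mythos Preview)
Your approach is essentially the paper's: both use the dilation invariance of $M_p^N$ to rescale $t\mapsto t/k^\mu$ and $\xi\mapsto k\xi$, pass to the limit $k\to\infty$ to transfer the multiplier bound from $e^{ta}$ to the principal part, and then invoke Theorem~\ref{3.7}(c). The paper streamlines the passage to the limit by citing the fact that a bounded sequence in $M_p^N$ converging pointwise a.e.\ has its limit in $M_p^N$ with the same bound (\cite{arendt-batty-hieber-neubrander11}, Prop.~E.2(f)), whereas you argue via test functions with $\hat f\in C_c^\infty(\R^n\setminus\{0\})$ and density; both routes are valid.

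One small correction: for fixed $\xi\neq 0$ the limit of $t\,a(k\xi)$ as $k\to\infty$ is $s\,a_0^{(h)}(\xi)$ (the strictly homogeneous extension), not $s\,a_0(\xi)$; your displayed expansion is literally correct only for $|\xi|\ge R$. The paper closes this gap by observing that $e^{ta_0}-e^{ta_0^{(h)}}$ is smooth with compact support and hence lies in $M_p^N$ uniformly for $t\in[0,T]$, so the bound transfers from $a_0^{(h)}$ to $a_0$ and Theorem~\ref{3.7}(c) applies. Your explicit verification of the quasi-hyperbolicity of $a_0$ is a detail the paper leaves implicit.
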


\begin{proof}
 Assume that $\mu>0$ and that the Cauchy problem \eqref{2-3} for $\op[a]$ is well-posed in $L^p(\R^n,\C^N)$ for some $p\not=2$. We choose $m\in \N_0$ with $\mu-m>0$ and $\mu-m-1\le 0$. Then $a-\sum_{j=0}^m a_j\in S^0(\R^n,\C^{N\times N})$, and by bounded perturbation (see Remark~\ref{2.11} b)), we may assume that $a = \sum_{j=0}^m a_j$.

  We choose $R>0$ such that $a_0,\dots,a_m$ are homogeneous for $|\xi|\ge R$ and fix $\chi\in C^\infty(\R^n)$ with $\chi=0$ for $|\xi|\le R$ and $\chi=1$ for $|\xi|\ge R+1$. As $\chi\in M_p^N$ by Mikhlin's theorem, we have by Theorem~\ref{2.7}
  \[ \|e^{ta(\cdot)} \chi(\cdot)\|_{M_p^N} \le C_T\quad (t\in [0,T]).\]
  In particular, the same estimate holds if we replace $t$ by $t k^{-\mu} \le t$ for $k\in\N$. By \cite{arendt-batty-hieber-neubrander11}, Proposition~E.2 e) again, we see that
   \[ \|e^{t k^{-\mu} a(k\,\cdot\,)} \chi(k\,\cdot\, )\|_{M_p^N} \le C_T\quad (t\in [0,T]).\]
 For every $\xi\in\R^n\setminus\{0\}$, the homogeneity of $a_j$ yields
 \[ k^{-\mu} a(k\xi) = \sum_{j=0}^m k^{-\mu} a_j(k\xi) = \sum_{j=0}^m k^{-j} a_j^{(h)}(\xi) \to a_0^{(h)}(\xi)\quad (k\to\infty),\]
 where $a_j^{(h)}$ denotes the strictly homogeneous version of $a_j$, i.e. the strictly homogeneous function which coincides for large $\xi$ with $a_j$. We also have $\chi(k\xi)\to 1$ for every $\xi\not=0$. Therefore, the sequence $(\exp(tk^{-\mu} a(k\,\cdot\,)) \chi(k\,\cdot\,))_{k\in\N}$ is a bounded sequence in $M_p^N$ converging pointwise almost everywhere to $\exp(ta_0^{(h)})$. Consequently, $\exp(ta_0^{(h)})\in M_p^N$ and
 \begin{equation}\label{3-4}
  \| e^{ta_0^{(h)}(\cdot)}\|_{M_p^N} \le C_T\quad (t\in [0,T])
  \end{equation}
 (see \cite{arendt-batty-hieber-neubrander11}, Proposition~E.2 f)).

 Since $m(t,\xi):= e^{ta_0(\xi)}-e^{ta_0^{(h)}(\xi)}$ is smooth in $\R^n\setminus\{0\}$ and has compact support, and since $a_0^{(h)}$ is homogeneous of positive degree, it is easy to see that for every $\alpha\in\N_0^n$, the expression $\xi^\alpha\partial_\xi^\alpha m(t,\xi)$ is bounded by a constant independent of $\xi$ and of $t\in [0,T]$. By Mikhlin's theorem, $m(t,\cdot)\in M_p^N$, and from \eqref{3-4} we get
 \[ \| e^{ta_0(\cdot)}\|_{M_p^N} \le C_T\quad (t\in [0,T]).\]
 Therefore, we can apply Theorem~\ref{3.7} c) to $a_0$ and obtain $\mu=1$ if $a_0$ satisfies the assumptions of Theorem~\ref{3.7}.
\end{proof}

We summarize Corollary~\ref{2.9}, Theorem~\ref{3.7}, and the above remarks in the following theorem which is one of the main results of the present paper.

\begin{theorem}
  \label{3.9}
  Let $a=(a_{ij})_{i,j=1,\dots,N}\colon \R^n\to \C^{N\times N}$ be a quasi-hyperbolic mixed-order system of classical pseudodifferential operators with constant coefficients, $a_{ij}\in S^{\mu_{ij}}_{\cl}(\R^n)$. For $p\in (1,\infty)$, let $A_p$ be the realization of $\op[a]$ in the basic space $X_p = \prod_{j=1}^n H_p^{s_j}(\R^n)$ with maximal domain.

  Define $\Lambda(\xi) := \diag(\langle\xi\rangle^{s_1},\dots,\langle\xi\rangle^{s_n})$ and $\tilde a := \Lambda a\Lambda^{-1}\in S_{\cl}^{\mu}(\R^n;\C^{N\times N})$ where $\mu$ is the maximal order of the entries of $\tilde a$. Let $\tilde a_0$ be the principal symbol of $\tilde a$.

  a) If $\mu\le 0$, then the Cauchy problem \eqref{2-3} is well-posed for all $p\in (1,\infty)$.

  b) If $\mu >0$ and for sufficiently large $\xi\in\R^n$ all eigenvalues of $\tilde a_0(\xi)$ have negative real part, then \eqref{2-3} is well-posed for all $p\in (1,\infty)$.

  c) Let $\mu>0$ and assume that there exists an eigenvalue $\lambda(\xi)\in i\R\setminus\{0\}$ of $\tilde a_0(\xi)$ for all $\xi \in S_{R,V} := \{r\eta: r>R,\, v\in V\}$, where $R>0$ and $V$ is an open nonempty set. If \eqref{2-3} is well-posed for some $p\not=2$, then $\mu=1$. Moreover, if $\lambda(\xi)$ only depends on $|\xi|$ for all $\xi\in S_{R,V}$, then well-posedness is only possible if $p=2$ or $n=1$.
\end{theorem}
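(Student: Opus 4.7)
The plan is to apply Corollary~\ref{2.9} to reduce well-posedness in the tuple space $X_p$ to well-posedness in $L^p(\R^n;\C^N)$ for the transformed symbol $\tilde a$, and then invoke the already-established results about homogeneous and polyhomogeneous symbols.

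\textbf{Step 1 (reduction).} By Corollary~\ref{2.9}, the Cauchy problem \eqref{2-3} for $\op[a]$ is well-posed in $X_p = \prod_{j=1}^N H_p^{s_j}(\R^n)$ if and only if the bound \eqref{2-11} for $\tilde a = \Lambda a \Lambda^{-1}$ holds. Since each $\langle\xi\rangle^{s_j}$ lies in $S^{s_j}_{\cl}(\R^n)$ and products of classical symbols are classical with orders adding, we indeed have $\tilde a \in S^\mu_{\cl}(\R^n;\C^{N\times N})$ with principal symbol $\tilde a_0 = \Lambda_0 a_0' \Lambda_0^{-1}$, where $\Lambda_0(\xi)$ is the strictly homogeneous version of $\Lambda(\xi)$. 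Quasi-hyperbolicity is preserved by the similarity transformation. Hence it suffices to prove parts a)--c) with $A$ replaced by $\op[\tilde a]$ acting on $L^p(\R^n;\C^N)$.

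\textbf{Step 2 (parts a) and b)).} If $\mu \le 0$, then $\tilde a \in S^0(\R^n;\C^{N\times N})$, so $\op[\tilde a]$ is a bounded operator on $L^p(\R^n;\C^N)$ for every $p \in (1,\infty)$ and thus trivially generates a $C_0$-semigroup; this is exactly Theorem~\ref{3.7}~a) applied to $\tilde a$. For b), the assumption that all eigenvalues of $\tilde a_0(\xi)$ have negative real part for large $\xi$ is the parameter-ellipticity condition used in Theorem~\ref{3.7}~b), so the same reasoning (analytic semigroup generation via resolvent estimates, as in \cite{grubb95}) applies verbatim to the principal symbol of $\tilde a$, giving well-posedness for all $p \in (1,\infty)$.

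\textbf{Step 3 (part c), first assertion).} Under the hypotheses of c), $\tilde a_0$ has an eigenvalue $\lambda(\xi) \in i\R\setminus\{0\}$ on the truncated cone $S_{R,V}$. Applying Lemma~\ref{3.9a} directly to $\tilde a$, well-posedness for some $p \neq 2$ forces $\mu = 1$.

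\textbf{Step 4 (moreover part).} Assume now in addition that $\lambda(\xi)$ depends only on $|\xi|$ on $S_{R,V}$. Since $\mu = 1$ and $\lambda$ is homogeneous of degree $1$ on $S_{R,V}$ (by the eigenvalue-smoothness statement of Remark~\ref{3.6}~b), after shrinking $V$ if necessary), rotational dependence through $|\xi|$ alone forces $\lambda(\xi) = \lambda^{(0)}|\xi|$ on $S_{R,V}$ for some $\lambda^{(0)} \in i\R\setminus\{0\}$. This is exactly the situation of Remark~\ref{3.8}~c): inspecting the proof of Theorem~\ref{3.7}~c), well-posedness in $L^p$ for $p \neq 2$ would give the representation $\lambda(\xi) = i\xi_0^\top \xi + i\lambda_0$ on a nonempty open subset of $S_{R,V}$, and equating with $\lambda^{(0)}|\xi|$ forces $\lambda_0 = 0$ and $\lambda^{(0)}|\xi| = i\xi_0^\top \xi$ on an open set, which is only possible when $n = 1$. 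Hence for $n \geq 2$ we must have $p = 2$.

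The only step requiring care is Step~4, where one must verify that the $|\xi|$-dependence hypothesis combined with the linear form \eqref{3-3} extracted from Brenner's lemma is genuinely incompatible with $n \geq 2$; but the elementary observation that $i\xi_0^\top \xi$ is a linear functional of $\xi$ while $\lambda^{(0)}|\xi|$ is radial makes this immediate in dimension $\ge 2$.
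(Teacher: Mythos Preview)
Your proposal is correct and follows essentially the same route as the paper: reduce to $\tilde a$ via Corollary~\ref{2.9}, invoke Theorem~\ref{3.7} a) and b) for parts a) and b), Lemma~\ref{3.9a} for the first assertion in c), and Remark~\ref{3.8}~c) (applied to $\tilde a_0$) for the ``moreover'' clause. You have merely spelled out in Steps~1 and~4 a few details the paper leaves implicit, in particular the observation that $|\xi|$-dependence plus degree-one homogeneity forces $\lambda(\xi)=\lambda^{(0)}|\xi|$ and the incompatibility of this radial form with the linear representation \eqref{3-3} when $n\ge 2$.
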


\begin{proof}
  a) and b) follow in exactly the same way as in the proof of Theorem~\ref{3.7}. The necessity of $\mu=1$ in c) is stated in Lemma~\ref{3.9a}, and the case of eigenvalues depending only on $|\xi|$ is discussed in Remark~\ref{3.8} c), applied to $\tilde a_0$.
\end{proof}

\begin{example}
 As a direct example of the above results, we consider a damped plate equation with $\rho(-\Delta)^\alpha u_t$ for $\alpha\in[0,1]$ and $\rho>0$ as a damping term, i.e. we consider the equation
 \begin{align}\label{DampedPlate}
 \begin{aligned}
  u_{tt}(t,x)+\Delta^2u(t,x)+\rho(-\Delta)^\alpha u_t(t,x)=0\quad&((t,x)\in[0,\infty)\times\R^n),\\
  u(0,x)=u_0(x)\quad &(x\in\R^n),\\
  u_t(0,x)=u_1(x)\quad &(x\in\R^n).
  \end{aligned}
 \end{align}
If we substitute $v:=u_t$ and set $U:= (u, v)^\top$, we obtain the Cauchy problem
\begin{align*}
 (\partial_t-A(D))U(t)=0\quad(t>0),\quad U(0)=U_0
\end{align*}
with
\begin{align*}
 A(D):=\begin{pmatrix}
  0 & 1 \\
  -\Delta^2 & -\rho(-\Delta)^{\alpha}
 \end{pmatrix}
\end{align*}
and $U_0:=(u_0,u_1)^\top$. We take $X_p:=W^{2}_p(\R^n)\times L^p(\R^n)$ as the basic space and $D(A_p):= W^{4}_p(\R^n)\times W^2_p(\R^n)$ as the domain of the realization of $A(D)$ in $X_p$ which is given by
\[
 A_p\colon X_p\supset D(A_p)\to X_p,\;A_pU:=A(D)U.
\]
For $p=2$, this leads to a well-posed problem. This can be seen as a direct application of the Corollary~\ref{2.9} since straightforward calculation shows that condition \ref{2.9} (ii) is satisfied. In particular, this choice of spaces is the natural one. On the other hand, in the case $\alpha=1$ (so-called structural damping), the operator $A_p$ generates an analytic $C_0$-semigroup in $X_p$ and even has maximal $L^p$-regularity for every $p\in (1,\infty)$. This has been proved in \cite[Theorem 2.5]{denk-schnaubelt15}, but we will see below that the well-posedness also immediately follows by Theorem \ref{3.9}.

But first, we turn our attention to the case $p\in(1,\infty)\setminus\{2\}$ and $\alpha\in[0,1)$. Following the approach of Theorem \ref{3.9}, we consider the symbol $\tilde{a}:=\Lambda a \Lambda^{-1}$ where $a$ is the symbol belonging to $A(D)$ and $\Lambda(\xi)=\operatorname{diag}(\langle\xi\rangle^2,1)$. We obtain
\begin{align*}
  \tilde{a}(\xi)=\begin{pmatrix}
                  0 & \langle \xi \rangle^{2} \\
                  -\tfrac{|\xi|^4}{\langle \xi \rangle^2} & -\rho |\xi|^{2\alpha}
                 \end{pmatrix}
\end{align*}
with homogeneous  principal symbol
\begin{align*}
   \tilde{a}_0(\xi)=\begin{pmatrix}
                  0 & |\xi|^2 \\
                  -|\xi|^2 & 0
                 \end{pmatrix}.
\end{align*}
Hence, we have the eigenvalues $\lambda_1(\xi)=i|\xi|^2$ and $\lambda_2(\xi)=-i|\xi|^2$ and it follows from Theorem \ref{3.9} c) that the equation \eqref{DampedPlate} is not well-posed in $X_p$ for $p\neq2$ even in the one-dimensional case.\\
If we take $\alpha=1$, then the principal symbol is given by
\begin{align*}
   \tilde{a}_0(\xi)=\begin{pmatrix}
                  0 & |\xi|^2 \\
                  -|\xi|^2 & -\rho|\xi|^2
                 \end{pmatrix}.
\end{align*}
so that we obtain
$$\lambda_1(\xi)=-\tfrac{1}{2}(\rho-\sqrt{\rho^2-4})|\xi|^2 \quad\text{and} \quad\lambda_2(\xi)=-\tfrac{1}{2}(\rho+\sqrt{\rho^2-4})|\xi|^2$$
as the eigenvalues. Theorem \ref{3.9} b) now implies that the equation \eqref{DampedPlate} is well-posed in $X_p$ for $p\in(1,\infty)$ and $\alpha=1$.
\end{example}

\section{Application to the thermoelastic plate equation}

In this section, we apply the previous results to the thermoelastic plate equation with Fourier and Maxwell-Cattaneo type heat conduction model, respectively. The dissipative structure of this equation in $L^2$-spaces has been studied, e.g., in \cite{racke-ueda16}. Omitting physical constants, the linear thermoelastic plate equation is given by
\begin{equation}
  \label{4-1}
  \begin{aligned}
    u_{tt} + \Delta^2 u - \mu \Delta u_{tt} + \Delta\theta & = 0,\\
    \theta_t + \div q - \Delta u_t & = 0,\\
    \tau q_t + q + \nabla \theta & = 0.
  \end{aligned}
\end{equation}
In \eqref{4-1}, the unknowns are the elongation $u=u(t,x)$ of the plate at time $t\ge 0$ and position $x\in\R^n$, the temperature (difference) $\theta=\theta(t,x)$, and the heat flux $q=q(t,x)$. The two parameters $\mu,\tau\ge 0$ describe whether an inertial term is present ($\mu>0$) and which type of heat conduction model is used ($\tau=0$ for Fourier's law and $\tau>0$ for Cattaneo-Maxwell's law). In the $L^2$-setting, many results are known, for instance on (non-)exponential stability and regularity loss phenomena. For this, we refer to \cite{lasiecka-triggiani98}, \cite{liu-liu97}, \cite{quintanilla-racke08}, \cite{racke-ueda16} and the references therein.

\subsection{Cattaneo-Maxwell's law}

We first consider the case $\tau>0$, i.e., Cattaneo-Maxwell's law of heat conduction. We start with the additional assumption $\mu>0$. In this case, we apply the operator $(1-\mu\Delta)^{-1}$ to the first equation in \eqref{4-1} and set $U:= (u, v, \theta, q)^\top$ with $v:= u_t$. We obtain the Cauchy problem
\begin{equation}\label{4-3}
 (\partial_t - A(D)) U(t) = 0  \; (t >0),\quad U(0)=U_0
\end{equation}
with
\[ A(D) := \begin{pmatrix}
  0 & 1 & 0 & 0 \\
  -(1-\mu\Delta)^{-1} \Delta^2 & 0  & -(1-\mu\Delta)^{-1} \Delta & 0 \\
  0 & \Delta & 0 & -\div \\
  0 & 0 & -\frac 1\tau \nabla & -\frac 1\tau
\end{pmatrix}\]
and $U_0 := (u_0,u_1,\theta_0,q_0)^\top$. The symbol of $A(D)$ is given by
\[ a(\xi) :=
\begin{pmatrix}
			0 & 1 & 0 & 0 &\ldots&0\\
			\frac{-\vert\xi\vert^4}{1+\mu\vert\xi\vert^2} & 0 & \frac{\vert\xi\vert^2}{1+\mu\vert\xi\vert^2} & 0 &\ldots&0\\
			0 & -\vert\xi\vert^2 & 0 & i\xi_1 &\ldots & i\xi_n\\
			0 & 0 & \frac{i\xi_1}{\tau} & -\frac{1}{\tau} &\ldots& 0\\
			\vdots&\vdots&\vdots&\vdots&\ddots&\vdots\\
			0&0&\frac{i\xi_n}{\tau}&0&\ldots&-\frac{1}{\tau}
 \end{pmatrix}.\]
As the basic space for the Cauchy problem, we natural choice is
\[ X_p := W_p^2(\R^n) \times W_p^1(\R^n) \times L^p(\R^n) \times L^p(\R^n;\C^n).\]
The realization of $A(D)$ in $X_p$ is given by the operator $A_p\colon X_p\supset D(A_p)\to X_p$ with maximal domain
\[ D(A_p) := \big\{ U\in X_p: A(D) U\in X_p\big\},\; A_pU := A(D) U.\]
By the structure of the matrix $A(D)$, we immediately obtain
\begin{align*}
 D(A_p) & = \big\{ (u,v,\theta,q)^\top \in W_p^3(\R^n)\times W_p^2(\R^n)\times W_p^1(\R^n)\times L^p(\R^n;\C^n): \\
 & \qquad \div q \in L^p(\R^n)\big\}.
 \end{align*}
We start with some remarks on the $L^2$-case. Part b) of the following lemma shows that the choice of the space $X_p$ essentially is the only possible one even for $p=2$.

\begin{lemma}
  \label{4.1}
  Let $\tau>0$, $\mu>0$, and $p=2$.

  a) The operator $A_2$ generates a $C_0$-semigroup in $X_2$.

  b) For $\mathbf s=(s_1,\dots,s_4)$, let $A_2^{(\mathbf s)}$ be the realization of $A(D)$ in the basic space
  \[ X_2^{(\mathbf s)} := H^{s_1}(\R^n)\times H^{s_2}(\R^n)\times H^{s_3}(\R^n)\times H^{s_4}(\R^n;\C^n)\]
  with maximal domain. If $A_2^{(\mathbf s)}$ generates a $C_0$-semigroup in $X_2^{(\mathbf s)}$, then $\mathbf s = (c+2,c+1,c,c)$ for some $c\in\R$.
\end{lemma}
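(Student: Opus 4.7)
For part (a), my plan is to verify the multiplier condition (ii) of Corollary~\ref{2.9} with $\Lambda(\xi):=\diag(\langle\xi\rangle^2,\langle\xi\rangle,1,\ldots,1)$. Since $p=2$, Plancherel's identity reduces the $M_2^{n+3}$-norm to the essential supremum of the pointwise operator norm, so it suffices to show
\[\sup_{\xi\in\R^n}\big\|\Lambda(\xi)e^{ta(\xi)}\Lambda(\xi)^{-1}\big\|_{\C^{n+3}\to\C^{n+3}}\le C_T\quad(t\in[0,T]).\]
This I would obtain by a frequency-wise energy identity modelled on the classical thermoelastic energy. For each $\xi\in\R^n$ and $U=(U_1,U_2,U_3,U_4)\in\C\times\C\times\C\times\C^n$ define the Hermitian form
\[E(U,\xi):=|U_1|^2+|\xi|^4|U_1|^2+(1+\mu|\xi|^2)|U_2|^2+|U_3|^2+\tau|U_4|^2,\]
which is uniformly equivalent to $\|\Lambda(\xi)U\|^2$ on $\R^n\times\C^{n+3}$; the leading $|U_1|^2$ term takes care of the low-frequency regime not seen by $|\xi|^4$. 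A short computation using the rows of $a(\xi)$ shows that the cross terms $|\xi|^4\Re(U_1\overline{U_2})$, $|\xi|^2\Re(U_2\overline{U_3})$ and $\Im(\xi\cdot U_4\,\overline{U_3})$ cancel exactly, leaving
\[\tfrac{d}{dt}E\big(e^{ta(\xi)}U_0,\xi\big)=-2|U_4(t)|^2+2\Re\big(U_1(t)\overline{U_2(t)}\big)\le 2\,E\big(e^{ta(\xi)}U_0,\xi\big)\]
with a constant independent of $\xi$. Gronwall then yields $E(\,\cdot\,,\xi)\le e^{2t}E(U_0,\xi)$ pointwise in $\xi$, which is the desired uniform bound.

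For part (b), suppose $A_2^{(\mathbf s)}$ generates a $C_0$-semigroup. Corollary~\ref{2.9} together with Plancherel force $\sup_\xi\|\Lambda_{\mathbf s}(\xi)e^{ta(\xi)}\Lambda_{\mathbf s}(\xi)^{-1}\|\le C_T$ on $[0,T]\times\R^n$, with $\Lambda_{\mathbf s}(\xi):=\diag(\langle\xi\rangle^{s_1},\langle\xi\rangle^{s_2},\langle\xi\rangle^{s_3},\langle\xi\rangle^{s_4}I_n)$. Since the eigenvalues of $a(\xi)$ are unchanged by conjugation, the constraint on $\mathbf s$ must come from the eigenvector geometry. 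I would then carry out a large-$|\xi|$ analysis of the spectrum of $a(\xi)$. Splitting $q$ into its $\xi$-parallel and $\xi$-perpendicular parts reduces the eigenvalue problem to a $4\times 4$ coupled block (the perpendicular components decouple into $n-1$ trivial modes with eigenvalue $-1/\tau$, imposing no condition on $\mathbf s$). The leading-order dispersion relation for the coupled block factors through $\mu\tau c^4-(2\tau+\mu)c^2+1=0$, whose discriminant $4\tau^2+\mu^2>0$ forces two distinct positive roots $c_+^2>c_-^2>0$; this yields four simple purely imaginary eigenvalues $\lambda\sim\pm ic_\pm|\xi|$. Solving the eigenequation iteratively, the right and left eigenvectors of each wave branch scale as $r\sim(1,|\xi|,|\xi|^2,|\xi|^2)^\top$ and $l\sim(1,|\xi|^{-1},|\xi|^{-2},|\xi|^{-2})^\top$ respectively (the last block being parallel to $\xi$), and the pairing $l^\top r$ stays bounded away from zero.

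From these scalings, the spectral projection of $\tilde a(\xi):=\Lambda_{\mathbf s}a\Lambda_{\mathbf s}^{-1}$ onto any of the wave eigenvalues equals, up to a bounded nonzero factor, the rank-one matrix $(\Lambda_{\mathbf s}r)(l^\top\Lambda_{\mathbf s}^{-1})$, and hence
\[\|\tilde P_\pm\|\sim\|\Lambda_{\mathbf s}r\|\cdot\|\Lambda_{\mathbf s}^{-1}l\|\sim\langle\xi\rangle^{M-m},\quad M:=\max(s_1,s_2+1,s_3+2,s_4+2),\ m:=\min(s_1,s_2+1,s_3+2,s_4+2).\]
Restricting $\tilde a(\xi)$ to the two-dimensional invariant subspace spanned by a pair of conjugate wave eigenvectors, the resulting $2\times 2$ matrix is structurally similar to the toy example $\bigl(\begin{smallmatrix}0&N\\-1/N&0\end{smallmatrix}\bigr)$ with $N\sim\langle\xi\rangle^{M-m}$ and purely imaginary eigenvalues; an explicit exponentiation at the time $t_\xi:=\pi/(2c_+|\xi|)\in(0,T]$ (available once $|\xi|$ is large enough) then gives $\|e^{t_\xi\tilde a(\xi)}\|\gtrsim\langle\xi\rangle^{M-m}$. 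Uniform boundedness of the left-hand side forces $M=m$, i.e.\ $s_1=s_2+1=s_3+2=s_4+2$, and setting $c:=s_3=s_4$ yields $\mathbf s=(c+2,c+1,c,c)$.

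The main technical obstacle is the asymptotic eigenvector expansion: establishing the claimed scaling for both right and left eigenvectors of all four wave branches precisely enough to guarantee that $l^\top r$ stays bounded away from zero, and that the $2\times 2$ reduction indeed captures the dominant contribution to $\|e^{t\tilde a(\xi)}\|$ on the time scale $t\sim|\xi|^{-1}$. Once this algebra is in place, the passage from $\|\tilde P_\pm\|\to\infty$ to $\sup_t\|e^{t\tilde a(\xi)}\|\to\infty$ is a short linear-algebra computation.
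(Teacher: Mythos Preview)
Your argument for part (a) is correct and is essentially the Fourier-side version of the paper's proof: the energy functional $E(U,\xi)$ you write down is precisely the square of the equivalent norm the paper uses to run Lumer--Phillips, and your Gronwall estimate is the pointwise dissipativity-up-to-a-bounded-perturbation statement.

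For part (b), however, your route diverges from the paper's and, as written, has a genuine gap. The paper does \emph{not} analyse the semigroup or the eigenvectors at all; instead it uses the Hille--Yosida resolvent bound $\|\lambda(\lambda-A_2^{(\mathbf s)})^{-1}\|\le C$ for large real $\lambda$, sets $\xi=(\rho,0,\dots,0)$ and $\lambda=\lambda_0\rho$, and observes that to leading order in $\rho$ one obtains $\Lambda^{(\mathbf s-\mathbf s^{(0)})}(b_0-\lambda_0 I_4)^{-1}\Lambda^{(\mathbf s^{(0)}-\mathbf s)}$ for a fixed matrix $b_0$. Every entry of $(b_0-\lambda_0 I_4)^{-1}$ is a nontrivial rational function of $\lambda_0$, so for a generic choice of $\lambda_0$ all sixteen entries are nonzero; boundedness then forces $s_i-s_i^{(0)}=s_j-s_j^{(0)}$ for all $i,j$ in one stroke. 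This avoids all asymptotic spectral analysis.

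The gap in your approach is the last step. Restricting $\tilde a(\xi)$ to the two-dimensional invariant subspace $\tilde V_+=\operatorname{span}(\tilde r_+,\tilde r_-)$ and exponentiating only detects the \emph{angle between $\tilde r_+$ and $\tilde r_-$}, i.e.\ $\|\tilde P_+|_{\tilde V_+}\|$, which can be strictly smaller than $\|\tilde P_+\|\sim\langle\xi\rangle^{M-m}$. Concretely, since the first and third components of the eigenvector are even in $\lambda$ while the second and fourth are odd, one computes
\[
\|\tilde P_+|_{\tilde V_+}\|=\frac{1}{\sin\theta}\sim\frac{\|\tilde r_+\|^2}{2\sqrt{(|\tilde r_1|^2+|\tilde r_3|^2)(|\tilde r_2|^2+|\tilde r_4|^2)}}\sim\langle\xi\rangle^{M-\min\{\max(s_1,s_3+2),\,\max(s_2+1,s_4+2)\}},
\]
so your $2\times 2$ blow-up only yields the constraint $\max(s_1,s_3+2)=\max(s_2+1,s_4+2)$, not $s_1=s_2+1=s_3+2=s_4+2$. (For instance $\mathbf s=(2,1,-5,-10)$ passes your test but is not of the required form.) To repair this within your framework you would need a genuinely four-mode argument---e.g.\ sample $e^{t\tilde a(\xi)}$ at five times $t_k=k\delta/|\xi|$ and invert the resulting Vandermonde system to recover each $\tilde P_j$ with uniformly bounded coefficients---rather than a single $2\times 2$ restriction. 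The resolvent approach in the paper sidesteps this entirely.
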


\begin{proof}
a) This can be seen by a standard application of the Lumer-Phillips theorem where it is convenient to endow $X_2$ with the equivalent norm
\[ \|u\|_{X_2}^2 = \|u\|_{H^2(\R^n)}^2 + \|(1-\mu\Delta)^{-1} v\|_{L^2(\R^n)}^2 + \|\theta\|_{L^2(\R^n)}^2 + \|q\|_{L^2(\R^n;\C^n)}^2.\]
With this norm, it is straightforward to see that the operator
\[ \begin{pmatrix}
			0 & 1 & 0 & 0\\
			-(1-\mu\Delta)^{-1}(\Delta^2-1) & 0 & -(1-\mu\Delta)^{-1}\Delta & 0\\
			0 & \Delta & -1 & -\operatorname{div}\\
			0 & 0 & -\frac{\nabla}{\tau} & -\frac{1}{\tau}
		\end{pmatrix}\]
is a bounded perturbation of $A_2$ and dissipative in $X_2$.

b) For $\mathbf s\in\R^4$ and $\xi\in\R^n$, we define
\[ \Lambda^{(\mathbf s)} (\xi) := \diag \big( \langle \xi\rangle^{s_1},\langle\xi\rangle^{s_2},
\langle\xi\rangle^{s_3}, \langle\xi\rangle^{s_4}, \ldots, \langle\xi\rangle^{s_4}\big)\in\R^{(n+3)\times(n+3)}.\]
Assume that $A^{(\mathbf s)}$ generates a $C_0$-semigroup in $X_2^{(\mathbf s)}$. By the Hille-Yosida theorem, there exists a $C>0$ such that for large $\lambda>0$ we have $\|\lambda(\lambda-A_2^{(\mathbf s)})^{-1}\|_{L(X_2^{(\mathbf s)})}\le C$. For the symbol, this implies
\[ \big\|\Lambda^{(\mathbf s)}(\xi)\lambda(\lambda-a(\xi))^{-1}\Lambda^{(-\mathbf s)}(\xi)\big\|_{L^\infty(\R^n;\C^{(n+3)\times(n+3)})} \le C.\]
Let $\mathbf s^{(0)} := (2,1,0,0)$. Setting $\xi=(\rho,0,\ldots,0)^\top$ and $\lambda = \lambda_0\rho$ with large $\rho>0$ and fixed $\lambda_0>0$, we obtain
\begin{align}
  \Lambda^{(\mathbf s)}(\xi)& \lambda(\lambda-a(\xi))^{-1}\Lambda^{(-\mathbf s)}(\xi) \nonumber\\
  & = \Lambda^{(\mathbf s -\mathbf s^{(0)})}(\xi) \begin{pmatrix}
    b_0-\lambda_0 I_4 & 0 \\  0 & -\lambda_0 I_{n-1}
  \end{pmatrix}^{-1} \Lambda^{(-\mathbf s +\mathbf s^{(0)})}(\xi)\label{4-2}
\end{align}
modulo lower-order terms with respect to $\rho\to\infty$. Here,
\[ b_0 := \begin{pmatrix}
  0 & 1 & 0 & 0 \\
  -\frac 1\mu & 0 & \frac 1\mu & 0 \\
  0 & -1 & 0 & i\\
  0 & 0 & \frac i\tau & 0
\end{pmatrix}.\]
Direct calculations show that every entry of the matrix $(b^{ij}(\lambda_0))_{i,j=1,\dots,4} := (b_0-\lambda_0I_4)^{-1}$ is a nontrivial rational function of $\lambda_0$ with coefficients depending polynomially on $\frac 1\mu$ and $\frac 1\tau$. Therefore, for every fixed $\mu>0$ and $\tau>0$, we can choose a $\lambda_0>0$ such that every entry $b_{ij}(\lambda_0)$ is non-zero.
For $i,j=1,\dots,4$, the entry of the matrix  \eqref{4-2} at position $(i,j)$ is given by
\[ \langle\xi\rangle ^{s_i-s_i^{(0)}-s_j+s_j^{(0)}} b^{ij}(\lambda_0).\]
Due to $b^{ij}(\lambda_0)\not=0$, we obtain from the boundedness of  \eqref{4-2}
\[ s_i-s_i^{(0)}-s_j+s_j^{(0)} = 0\quad\text{for all }i,j=1,\dots,4.\]
This implies $s_i-s_i^{(0)} = c$ for some $c\in\R$.
\end{proof}

\begin{lemma}
  \label{4.2}
  Let $\tau>0$, $\mu>0$, and $p\in(1,\infty)$. Then $A_p$ does not generate an analytic semigroup in $X_p$.
\end{lemma}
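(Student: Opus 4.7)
The plan is to exhibit a sequence of spectral points of $A_p$ whose imaginary parts diverge while their real parts remain bounded; such a sequence is incompatible with the sectoriality that every analytic semigroup generator satisfies. Using Corollary~\ref{2.9}, $A_p$ is similar through $\op[\Lambda]$ (with $\Lambda(\xi)=\diag(\langle\xi\rangle^2,\langle\xi\rangle,1,\dots,1)\in\R^{(n+3)\times(n+3)}$) to $\op[\tilde a]$ on $L^p(\R^n;\C^{n+3})$, where $\tilde a=\Lambda a\Lambda^{-1}$. Since $a(\xi)$ and $\tilde a(\xi)$ are pointwise similar matrices, their pointwise spectra agree, so it suffices to analyse the eigenvalues of $a(\xi)$.

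The key computation is to follow the eigenvalues of $a(\rho e_1)$ as $\rho\to\infty$. The $n-1$ rows/columns indexed by $q_2,\dots,q_n$ decouple and contribute only the fixed eigenvalue $-1/\tau$; the non-trivial part is the $4\times 4$ block coupling $(u,v,\theta,q_1)$,
\[ M(\rho)=\begin{pmatrix} 0 & 1 & 0 & 0 \\ -\tfrac{\rho^4}{1+\mu\rho^2} & 0 & \tfrac{\rho^2}{1+\mu\rho^2} & 0 \\ 0 & -\rho^2 & 0 & i\rho \\ 0 & 0 & \tfrac{i\rho}{\tau} & -\tfrac{1}{\tau}\end{pmatrix}. \]
A direct expansion of $\det(\lambda I_4-M(\rho))$ yields
\[ P(\lambda,\rho)=\lambda^4+\tfrac{\lambda^3}{\tau}+\tfrac{\lambda^2\rho^2}{\tau}+\tfrac{\rho^4}{1+\mu\rho^2}\Bigl(2\lambda^2+\tfrac{2\lambda}{\tau}+\tfrac{\rho^2}{\tau}\Bigr). \]
Rescaling $\lambda=\rho\tilde\lambda$ and dividing by $\rho^4$, this polynomial converges uniformly on compact subsets of $\tilde\lambda$ to
\[ Q(\tilde\lambda)=\tilde\lambda^4+\Bigl(\tfrac{1}{\tau}+\tfrac{2}{\mu}\Bigr)\tilde\lambda^2+\tfrac{1}{\mu\tau}. \]
Its discriminant $(\tfrac{1}{\tau}+\tfrac{2}{\mu})^2-\tfrac{4}{\mu\tau}=\tfrac{1}{\tau^2}+\tfrac{4}{\mu^2}$ is strictly positive, and Vieta's formulas give two real and negative values of $\tilde\lambda^2$. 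Therefore $Q$ has four purely imaginary simple roots $\pm i\beta_1,\pm i\beta_2$ with $0<\beta_1\le\beta_2$. Continuous dependence of polynomial roots then produces a root $\mu_\rho$ of $P(\cdot,\rho)$ with $\mu_\rho/\rho\to i\beta_1$; in particular $\Im\mu_\rho\to\infty$. The upper bound $\Re\mu_\rho\le M_a$ is given for free by quasi-hyperbolicity (Theorem~\ref{2.2}), and a first-order expansion $\tilde\lambda=i\beta_1+c/\rho+o(1/\rho)$ in the defining equation $P(\rho\tilde\lambda,\rho)/\rho^4=0$ produces a \emph{real} value of $c$ (the inputs at that order are real multiples of $i$), so $\Re\mu_\rho\to c$, in particular $\Re\mu_\rho$ stays bounded.

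Finally, we convert pointwise eigenvalues of the symbol into spectrum. As $\mu_\rho$ is an eigenvalue of $\tilde a(\rho e_1)$, the pointwise inverse $(\mu_\rho-\tilde a(\xi))^{-1}$ has a pole at $\xi=\rho e_1$ and is not essentially bounded on $\R^n$; since every $L^p$-Fourier multiplier is bounded in the $L^\infty(\R^n;\C^{(n+3)\times(n+3)})$ matrix norm, $(\mu_\rho-\tilde a(\cdot))^{-1}\notin M_p^{n+3}$. By the similarity of Corollary~\ref{2.9}, combined with the standard fact that $\lambda\in\rho(\op[\tilde a])$ if and only if $(\lambda-\tilde a(\cdot))^{-1}\in M_p^{n+3}$, this gives $\mu_\rho\in\sigma(A_p)$. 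If $A_p$ generated an analytic semigroup, then it would be sectorial and $\sigma(A_p)$ would lie in a set of the form $\{\lambda:\Re\lambda\le\omega-c'|\Im\lambda|\}$ for some $\omega\in\R$ and $c'>0$ whenever $|\Im\lambda|$ is large enough; hence $|\Im\mu_\rho|\to\infty$ would force $\Re\mu_\rho\to-\infty$, in contradiction with the boundedness of $\Re\mu_\rho$ established above. The main technical obstacle is the last piece of Step 2, namely showing that $\Re\mu_\rho$ remains bounded; this is a routine first-order perturbation of the roots of $P$, whose coefficients have a suitable real/imaginary structure, and I expect no conceptual difficulty.
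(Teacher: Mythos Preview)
Your approach is correct and coincides with the paper's: both arguments produce an eigenvalue $\lambda_1(\xi)$ of $a(\xi)$ with $|\Im\lambda_1(\xi)|\to\infty$ and $|\Re\lambda_1(\xi)|$ bounded as $|\xi|\to\infty$, then observe that such points lie in $\sigma(A_p)$ and obstruct sectoriality. The only difference is that the paper obtains the eigenvalue asymptotics by citing \cite{racke-ueda16}, (4.43), whereas you derive them directly via the rescaling $\lambda=\rho\tilde\lambda$ and the first-order expansion (which, incidentally, does go through exactly as you outline: the order-$1/\rho$ equation is $Q'(i\beta_1)c = i\beta_1^3/\tau - 2i\beta_1/(\mu\tau)$, purely imaginary on both sides with $Q'(i\beta_1)\in i\R\setminus\{0\}$, giving $c\in\R$).
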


\begin{proof}
  It was shown in \cite{racke-ueda16}, (4.43), that there exists an eigenvalue $\lambda_1(\xi)$ of $a(\xi)$ with $|\Im\lambda_1(\xi)|\to\infty$ and $|\Re\lambda_1(\xi)|\le C$ for $|\xi|\to\infty$. Therefore, the resolvent set of $A_p$ does not contain any sector of the complex plane with angle larger than $\frac\pi 2$. By this, $A_p-\lambda_0$ is not sectorial for any $\lambda_0>0$ which implies that $A_p$ does not generate an analytic semigroup.
\end{proof}

\begin{theorem}
  \label{4.3}
  Let $\tau>0$, $\mu>0$, and $p\in (1,\infty)\setminus\{2\}$. Then the operator $A_p$ generates a $C_0$-semigroup in $X_p$ (and the Cauchy problem \eqref{4-3} is well-posed) if and only if $n=1$.
\end{theorem}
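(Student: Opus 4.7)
The plan is to combine Corollary~\ref{2.9} with Theorem~\ref{3.9}~c) for the necessity direction, and to use a Kozhevnikov-type approximate diagonalization in one space dimension for the sufficiency direction.

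For the implication ``well-posedness $\Rightarrow n=1$'', I would choose the order-reducing matrix $\Lambda(\xi):=\diag(\langle\xi\rangle^2,\langle\xi\rangle,1,1,\ldots,1)$ corresponding to the regularities in $X_p$. A direct calculation shows that $\tilde a:=\Lambda a\Lambda^{-1}$ is a classical symbol of order one whose principal symbol equals
\[
 \tilde a_0(\xi)=\begin{pmatrix}
  0 & |\xi| & 0 & 0 & \cdots & 0\\
  -|\xi|/\mu & 0 & |\xi|/\mu & 0 & \cdots & 0\\
  0 & -|\xi| & 0 & i\xi_1 & \cdots & i\xi_n\\
  0 & 0 & i\xi_1/\tau & 0 & \cdots & 0\\
  \vdots & \vdots & \vdots & \vdots & \ddots & \vdots\\
  0 & 0 & i\xi_n/\tau & 0 & \cdots & 0
 \end{pmatrix}.
\]
A block-wise analysis of $\tilde a_0(\xi)v=\lambda v$ shows that $\lambda=0$ has geometric multiplicity $n-1$ (on the hyperplane $\xi^\perp$ in the $q$-component), while for $\lambda\neq 0$ the elimination of $q$ and $u_2$ reduces the eigenvalue equation to the biquadratic
\[
  \mu\tau\lambda^4+(2\tau+\mu)|\xi|^2\lambda^2+|\xi|^4=0.
\]
Its discriminant $(2\tau+\mu)^2-4\mu\tau=4\tau^2+\mu^2$ is positive but strictly less than $(2\tau+\mu)^2$, so both $\lambda^2$-roots are strictly negative. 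The four nontrivial eigenvalues are therefore of the form $\pm i\alpha_\pm|\xi|$ with explicit $\alpha_\pm>0$: purely imaginary, nonzero, and depending only on $|\xi|$. The second statement in Theorem~\ref{3.9}~c) then forces $n=1$ whenever \eqref{4-3} is well-posed in $L^p$ for some $p\neq 2$.

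For the converse $n=1\Rightarrow$ well-posedness, the four nontrivial eigenvalues of $\tilde a_0$ are pairwise distinct for $\xi\neq 0$ (since $\alpha_+\neq\alpha_-$), so Lemmas~\ref{3.3} and~\ref{3.4} yield smooth homogeneous eigenvectors outside the origin. Following the Kozhevnikov scheme outlined in the introduction, I would construct symbols $S,S^{-1}\in S^0_{\cl}(\R;\C^{4\times 4})$ with $S^{-1}\tilde aS=D+R$, where $D\in S^1_{\cl}$ is diagonal (with principal entries $\pm i\alpha_\pm|\xi|$, smoothly cut off near $\xi=0$) and $R\in S^0$. By Remark~\ref{3.8}~b) and Remark~\ref{2.11}~b), well-posedness for $\op[\tilde a]$ reduces to the verification of condition~(ii) of Corollary~\ref{2.9} for $\op[D]$, which decouples into four scalar one-dimensional Cauchy problems. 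Each scalar symbol $e^{\pm it\alpha_\pm|\xi|}$ splits on the two half-lines as a translation (an $L^p(\R)$-isometry) composed with the characteristic function of $\{\xi>0\}$ or $\{\xi<0\}$; the latter are the Riesz projections onto positive and negative frequencies, which are bounded on $L^p(\R)$ via the Hilbert transform with norm independent of $t$. Therefore \eqref{2-11} holds uniformly on $[0,T]$ and $A_p$ generates a $C_0$-semigroup on $X_p$.

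The main obstacle will be the rigorous execution of the approximate diagonalization together with the bookkeeping of the $S^0$-remainder $R$ and the cut-off near $\xi=0$; this is precisely the kind of technical step which motivates the separate treatment of the Fourier case in Theorem~\ref{4.5} below. The necessity direction, by contrast, reduces to a clean eigenvalue calculation once the order-reducing scaling $\Lambda$ has been identified.
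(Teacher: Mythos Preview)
Your necessity argument coincides with the paper's: the same order-reduction $\Lambda$, the same principal symbol $\tilde a_0$, the same biquadratic for the nontrivial eigenvalues, and the same invocation of Theorem~\ref{3.9}~c) (via Remark~\ref{3.8}~c)) to force $n=1$.

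For sufficiency your approach is correct but more elaborate than the paper's. You propose to build a diagonalizing symbol $S\in S^0_{\cl}(\R;\C^{4\times 4})$ in the Kozhevnikov style, reduce to scalar symbols $e^{\pm it\alpha_\pm|\xi|}$, and then split each of those on the two half-lines via the Riesz projections. The paper short-circuits the first step: since $n=1$, one has $|\xi|=\xi$ for $\xi\ge 0$ and $|\xi|=-\xi$ for $\xi<0$, so the \emph{matrix-valued} principal symbol already decomposes as $\tilde a_0(\xi)=\xi a_+\chi_{[0,\infty)}(\xi)+\xi a_-\chi_{(-\infty,0)}(\xi)$ with constant matrices $a_\pm\in\C^{4\times 4}$. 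Each $a_\pm$ has four distinct purely imaginary eigenvalues (hence is diagonalizable by a \emph{constant} matrix), so Remark~\ref{3.8a}~f) gives the multiplier bound for $e^{t\xi a_\pm}$ directly; the characteristic functions $\chi_{[0,\infty)},\chi_{(-\infty,0)}$ are handled by Mikhlin, and $\tilde a-\tilde a_0\in S^0$ is a bounded perturbation. This avoids entirely the construction of a $\xi$-dependent diagonalizer $S(\xi)$, the $S^0$-remainder bookkeeping, and the cut-off near $\xi=0$ that you flag as the main obstacle. The Kozhevnikov diagonalization is genuinely needed only in Theorem~\ref{4.5}, where the principal symbol mixes orders $1$ and $2$ and the half-line trick alone does not decouple the system.
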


\begin{proof}
  By Corollary~\ref{2.9}, we have to study the symbol $\tilde a(\xi) := \Lambda^{(\mathbf s)}(\xi)a(\xi)\Lambda^{(-\mathbf s)}(\xi)$ with $\mathbf s:=(2,1,0,0)$. We have $\tilde a(\xi) \in S^1_{\cl}(\R^n,\C^{(n+3)\times(n+3)})$ with principal symbol
  \[ \tilde a_0(\xi) = \begin{pmatrix}
    0 & |\xi| & 0 & 0 & 0 & \ldots & 0 \\
    -\frac{|\xi|}\mu & 0 & \frac{|\xi|}\mu & 0 & 0 & \ldots & 0 \\
    0 & -|\xi| & 0 & i\xi_1 & i\xi_2 & \ldots & i\xi_n\\
    0 & 0 & \frac{i\xi_1}\tau & 0 & 0 &\ldots & 0 \\
    \vdots & \vdots & \vdots & \vdots & \vdots & & \vdots  \\
    0 & 0 & \frac{i\xi_n}\tau & 0 & 0 & \ldots & 0
  \end{pmatrix}\]
  (for large $|\xi|$). Its characteristic polynomial is given by
  \[ \det(\lambda-\tilde a_0(\xi)) = \lambda^{n-1}\Big( \lambda^4 + (\tfrac 1\tau + \tfrac 2\mu)|\xi|^2\lambda^2 + \tfrac1{\tau\mu}|\xi|^4\Big).\]
  Therefore, all eigenvalues of $\tilde a_0(\xi)$ are functions of $|\xi|$ and lie on the imaginary axis. If $A_p$ generates a $C_0$-semigroup in $X_p$, then $n=1$ by Remark~\ref{3.8} c).

  Now let $n=1$. We write $\tilde a_0(\xi)$ in the form $\tilde a_0(\xi)= \xi a_+ \chi_{[0,\infty)}(\xi) + \xi a_-\chi_{(-\infty,0)}(\xi)$ with the constant matrices
  \[ a_\pm := \begin{pmatrix}
    0 & \pm 1 & 0 & 0 \\
    \mp \frac 1\mu & 0 & \pm\frac1\mu & 0 \\
    0 & \mp 1& 0 & i\\
    0 & 0 & \frac{i}\tau & 0
  \end{pmatrix}.\]
  We apply Theorem~\ref{2.7} to $e^{t\tilde a_0(\xi)} = e^{t\xi a_+}\chi_{[0,\infty)}(\xi) + e^{t\xi a_-}\chi_{(-\infty,0)}(\xi)$. As both matrices $a_+,a_-$ have four different purely imaginary eigenvalues, they are 	diagonalizable, and Remark~\ref{3.8a} d) yields that $e^{t\xi a_\pm}$ satisfies \eqref{2-5}. On the other hand, the characteristic functions $\chi_{[0,\infty)}$ and $\chi_{(-\infty,0)}$ are $L^p$-Fourier multipliers by Mikhlin's theorem. Therefore, the realization of $\tilde a_0$ generates a $C_0$-semigroup in $X_p$ for $n=1$. As $\tilde a$ is a bounded perturbation of $\tilde a_0$, we see that $A_p$ generates a $C_0$-semigroup in $X_p$ for $n=1$.
\end{proof}

Now we consider the case $\mu=0$. Now the natural setting is $X_p := W_p^2(\R^n)\times L^p(\R^n) \times L^p(\R^n)\times L^p(\R^n;\C^n)$. The maximal domain is given by
\begin{align*}
 D(A_p) & = \big\{ U = (u,v,\theta,q)^\top\in W_p^3(\R^n)\times W_p^2(\R^n)\times W_p^1(\R^n)\times L^p(\R^n;\C^n):\\
 & \quad
\div u\in L^p(\R^n),\, \Delta u+\theta\in W_p^2(\R^n)\big\}.
\end{align*}
In the case $\mu=0$, the results in the $L^2$-case are analog to the case $\mu>0$. However, even for $n=1$, the operator $A_p$ does not generate a $C_0$-semigroup:

\begin{theorem}
  \label{4.4}
  Let $\tau>0$ and $\mu=0$.

  a) Let $p=2$. Then the operator $A_2$ generates a $C_0$-semigroup in $X_2$ but no analytic semigroup.

  b) Let $p\in (1,\infty)\setminus\{2\}$. Then $A_p$ does not generate a $C_0$-semigroup in $X_p$.
\end{theorem}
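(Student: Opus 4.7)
The plan is to handle the three assertions in parallel with Lemma~\ref{4.1}, Lemma~\ref{4.2} and Theorem~\ref{4.3}: prove semigroup generation in $X_2$ via Lumer--Phillips, disprove analyticity via a spectral argument on the symbol, and rule out generation in $X_p$ for $p\neq 2$ by invoking Theorem~\ref{3.9}(c) applied to the order-reduced symbol.

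For the generation in $X_2$ (part~(a)), I would equip $X_2$ with an equivalent Hilbertian norm of physical-energy type, for instance with square
\[
\|U\|_*^2:=\|u\|_{W_2^2(\R^n)}^2+\|v\|_{L^2(\R^n)}^2+\|\theta\|_{L^2(\R^n)}^2+\tau\|q\|_{L^2(\R^n;\C^n)}^2,
\]
and compute $\Re(A_2U,U)_*$ by integration by parts. The skew-symmetric couplings between $u$ and $v$ (via $\Delta^2$), between $v$ and $\theta$ (via $\Delta$), and between $\theta$ and $q$ (via the $(\nabla,\div)$-pair) produce only purely imaginary contributions, so $\Re(A_2U,U)_*$ reduces to the dissipative term $-\|q\|_{L^2(\R^n;\C^n)}^2$ plus a bounded cross term that can be absorbed, yielding quasi-dissipativity. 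Surjectivity of $\lambda-A_2$ for large real $\lambda$ follows from quasi-hyperbolicity of $a$ and Plancherel as in the proof of Theorem~\ref{2.2}, and Lumer--Phillips then delivers the $C_0$-semigroup.

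For the non-analyticity in~(a) I would mimic Lemma~\ref{4.2}: it suffices to produce eigenvalues $\lambda_k$ of $a(\xi_k)$ for some $\xi_k\in\R^n$ with $|\Im\lambda_k|\to\infty$ and $\Re\lambda_k$ bounded, since then no sector of opening strictly greater than $\pi/2$ can lie in the resolvent set of $A_2$. After rotating to $\xi=(|\xi|,0,\ldots,0)^\top$ the symbol $a(\xi)$ decomposes into a $4\times 4$ block plus $n-1$ copies of $-1/\tau$, and the leading-in-$|\xi|$ part of the $4\times 4$ characteristic polynomial is $\lambda^4+2|\xi|^4\lambda^2$, whence there are two simple roots asymptotic to $\pm i\sqrt 2\,|\xi|^2$; the required upper bound on their real parts is inherited automatically from the quasi-dissipativity proved in the previous step.

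For part~(b), by Corollary~\ref{2.9} the question reduces to the conjugated symbol $\tilde a:=\Lambda a\Lambda^{-1}$ with $\Lambda(\xi):=\diag(\langle\xi\rangle^2,1,\ldots,1)\in\R^{(n+3)\times(n+3)}$. Reading off the orders of the entries one checks that $\tilde a\in S_{\cl}^2(\R^n;\C^{(n+3)\times(n+3)})$, so $\mu=2$, and that the principal symbol $\tilde a_0$ is supported on the upper-left $3\times 3$ block
\[
\begin{pmatrix}0&|\xi|^2&0\\ -|\xi|^2&0&|\xi|^2\\ 0&-|\xi|^2&0\end{pmatrix},
\]
whose characteristic polynomial is $\lambda(\lambda^2+2|\xi|^4)$. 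Hence $\tilde a_0(\xi)$ has the purely imaginary nontrivial eigenvalues $\pm i\sqrt 2\,|\xi|^2$, which depend only on $|\xi|$. Since $\mu=2\neq 1$, Theorem~\ref{3.9}(c) excludes well-posedness in $X_p$ for any $p\neq 2$, irrespective of the dimension $n$. The main obstacle is the non-analyticity part in~(a): a direct first-order perturbation analysis around $\pm i\sqrt 2\,|\xi|^2$ turns out to yield a correction that is itself purely imaginary, so the bound $\Re\lambda_k=O(1)$ cannot be read off at first order and must instead be inferred from dissipativity. A subtle pitfall in~(b) is that the entry $\tilde a(\xi)_{2,3}=|\xi|^2$ is of top order and must be kept in $\tilde a_0$; overlooking it would render the $3\times 3$ block nilpotent and lead to the wrong conclusion that all eigenvalues of $\tilde a_0$ vanish.
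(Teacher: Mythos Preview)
Your plan is essentially the paper's own: Lumer--Phillips for generation in $X_2$, a spectral obstruction for analyticity, and Theorem~\ref{3.9}(c) with $\mu=2$ for part~(b). Part~(b) in particular is exactly the paper's argument, including the correct $3\times 3$ principal block and the eigenvalues $\pm i\sqrt{2}\,|\xi|^2$.

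There is, however, a small but genuine gap in your non-analyticity argument for~(a). You correctly state that it suffices to exhibit eigenvalues $\lambda_k$ of $a(\xi_k)$ with $|\Im\lambda_k|\to\infty$ and $\Re\lambda_k$ \emph{bounded}, and you then propose to obtain the bound on $\Re\lambda_k$ from quasi-dissipativity. But dissipativity only yields the \emph{upper} bound $\Re\lambda_k\le\omega$; it says nothing about a lower bound, and without one your sufficient condition is not verified (eigenvalues with $\Re\lambda_k\to-\infty$ fast enough are perfectly compatible with sectoriality). The paper sidesteps this by citing \cite{racke-ueda16} for the two-sided estimate $|\Re\lambda_k|\le C$. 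You can repair your argument in at least two ways: either observe that the trace of the $4\times 4$ block equals $-1/\tau$, so the sum of the four real parts is bounded, which together with the common upper bound forces each real part to be bounded below as well; or, more simply, weaken your sufficient condition to $\Re\lambda_k=o(|\Im\lambda_k|)$, which already obstructs every left sector of opening $>\pi/2$ and follows immediately from $\lambda_k\sim \pm i\sqrt{2}\,|\xi|^2$ by continuity of the roots.
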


\begin{proof}
  a) This can be shown in an analog way as for the case $\mu>0$.

  b) Again we have to consider $\tilde a(\xi) = \Lambda^{(\mathbf s)}(\xi) a(\xi) \Lambda^{(-\mathbf s)}(\xi)$ where now $\mathbf s = (2,0,0,0)$. Now we have $\tilde a\in S^2(\R^n;\C^{(n+3)\times (n+3)})$, and the principal symbol is given by
  \[ \tilde a_0(\xi) = \begin{pmatrix}
    0 & |\xi|^2 & 0 & 0 \\
    -|\xi|^2 & 0 & |\xi|^2 & 0 \\
    0 & -|\xi|^2 & 0 & 0\\
    0 & 0 & 0 & 0
  \end{pmatrix}\in \C^{(n+3)\times(n+3)}.\]
  As this matrix has the purely imaginary eigenvalues $\pm \sqrt{2}|\xi|^2i$, Theorem~\ref{3.9} c) shows that $A_p$ is no generator of a $C_0$-semigroup.
\end{proof}

\subsection{Fourier's law}

Now let us consider the case $\tau=0$, i.e., the thermoelastic plate equation with Fourier's law of heat conduction. We first remark that for $\tau=0$ and $\mu=0$, the operator generates an analytic semigroup in the $L^p$-setting, see \cite{denk-racke06}, Theorem~3.5. Therefore, we only have to investigate the case $\mu>0$.

So we consider for $\mu>0$ the equation
\begin{align*}
  u_{tt} + \Delta^2 u - \mu \Delta u_{tt} + \Delta \theta & = 0,\\
  \theta_t -\Delta \theta -\Delta u_t & = 0
\end{align*}
in $\R^n$ with initial conditions $u|_{t=0} = u_0$, $u_t|_{t=0} = u_1$, $\theta|_{t=0}=\theta_0$. Setting $U:= (u,v,\theta)^\top$ with $v:=u_t$, we obtain the Cauchy problem
\begin{equation}
  \label{4-4}
  \big(\partial_t - A(D)\big) U(t) = 0 \; (t>0),\quad U(0)=U_0
\end{equation}
with $U_0 := (u_0,u_1,\theta_0)^\top$ and
\[ A(D) := \begin{pmatrix}
  0 & 1 & 0 \\
  -(1-\mu\Delta)^{-1} \Delta^2 & 0 -(1-\mu\Delta)^{-1}\Delta\\
  0 & \Delta & \Delta
\end{pmatrix}.\]
The natural basic space for the operator related to \eqref{4-4} is given by
\[ X_p := W_p^2(\R^n)\times W_p^1(\R^n)\times L^p(\R^n),\]
and the operator is defined as the realization of $A(D)$ in $X_p$ with maximal domain
\[ D(A_p) = W_p^3(\R^n)\times W_p^2(\R^n)\times W_p^2(\R^n).\]
The symbol of $A(D)$ equals
\[ a(\xi) := \begin{pmatrix}
  0 & 1 & 0 \\
  -\frac{|\xi|^4}{1+\mu|\xi|^2} & 0 & \frac{|\xi|^2}{1+\mu|\xi|^2}\\
  0 & -|\xi|^2 & -|\xi|^2
\end{pmatrix}.\]
Setting $\Lambda(\xi) := \diag( \langle \xi\rangle^2,\langle\xi\rangle, 1)$, we have to study the mixed-order symbol $\tilde a(\xi) := \Lambda(\xi)a(\xi)\Lambda(\xi)^{-1}$. We have $\tilde a \in S_{\cl}^2(\R^n;\C^{3\times 3})$ and, for large $|\xi|$,
\[ \tilde a(\xi) = |\xi|^2 a_0 +|\xi| a_1 + a_2(\xi)\]
with $a_2\in S_{\cl}^0(\R^n;\C^{3\times 3})$ and
\[ a_0 = \begin{pmatrix}
  0 & 0 & 0\\ 0 & 0 & 0 \\ 0 & 0 & -1
\end{pmatrix},\quad
a_1 =\begin{pmatrix}
  0 & 1 & 0 \\ -\frac 1\mu & 0 & \frac 1\mu\\
  0 & -1 & 0
\end{pmatrix}.\]

\begin{theorem}
  \label{4.5}
  Let $\tau=0$ and $\mu>0$.

  a) Let $p=2$. Then the operator $A_2$ generates a $C_0$-semigroup in $X_2$ but no analytic semigroup.

  b) Let $p\in (1,\infty)\setminus\{2\}$. Then $A_p$ generates a $C_0$-semigroup if and only if $n=1$.
\end{theorem}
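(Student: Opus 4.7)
The plan is to treat part~(a) via an $L^2$ energy estimate together with a direct eigenvalue computation, and part~(b) via an approximate block-diagonalization of $\tilde a(\xi)$ that separates the parabolic second-order direction from a first-order hyperbolic block; once the decoupling is achieved, only Theorem~\ref{3.9} and Remark~\ref{3.8a} remain to be invoked. For~(a), I would follow Lemma~\ref{4.1}\,a) and equip $X_2$ with the equivalent Hilbert norm $\|(u,v,\theta)\|_E^2 = \|\Delta u\|_{L^2}^2 + \|u\|_{L^2}^2 + \|v\|_{L^2}^2 + \mu\|\nabla v\|_{L^2}^2 + \|\theta\|_{L^2}^2$; a direct integration by parts, exploiting the symmetry of $(1-\mu\Delta)^{-1}$ and the cancellation between the coupling terms $\Delta\theta$ and $-\Delta v$, shows that $A_2$ is, up to a bounded perturbation, dissipative in $\|\cdot\|_E$, so Lumer--Phillips yields $C_0$-semigroup generation (the range condition follows from ellipticity of $\lambda-a(\xi)$ for large $\lambda$). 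Non-analyticity is established as in Lemma~\ref{4.2}: a direct computation with the characteristic polynomial of $a(\xi)$ shows that, for $|\xi|\to\infty$, two of its eigenvalues behave like $\pm i|\xi|/\sqrt\mu - \tfrac{1}{2\mu} + o(1)$, so their imaginary parts are unbounded while their real parts stay bounded, and $A_2-\lambda_0$ is therefore not sectorial for any $\lambda_0$.

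The heart of the proof is part~(b), where Theorem~\ref{3.9} cannot be applied directly: the principal symbol $|\xi|^2 a_0$ of $\tilde a$ with $a_0 = \diag(0,0,-1)$ has eigenvalues $0$ (of multiplicity $2$) and $-|\xi|^2$, none of them purely imaginary. The strategy is to perform an approximate block-diagonalization adapted to the decomposition $\C^3 = \operatorname{span}(e_1,e_2) \oplus \operatorname{span}(e_3)$. Writing $\tilde a = |\xi|^2 a_0 + |\xi| a_1 + a_2$ with $a_2\in S^0_{\cl}$, I would seek $S = I + \sum_{k\ge 1} s_{-k}$ with $s_{-k}\in S^{-k}_{\cl}(\R^n;\C^{3\times 3})$ taking values in the off-diagonal blocks; at each order this yields a Sylvester-type commutator equation $[a_0, s_{-k}]=r_{-k}$, uniquely solvable on off-diagonal blocks because the diagonal blocks of $a_0$ have disjoint spectra $\{0\}$ and $\{-1\}$. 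Taking an asymptotic sum and cutting off near $\xi=0$ produces $S, S^{-1}\in S^0_{\cl}(\R^n;\C^{3\times 3})$ with
\[ S^{-1}\tilde a\, S = \diag(b,c) + r, \]
where $b\in S^1_{\cl}(\R^n;\C^{2\times 2})$ has principal symbol $|\xi|\,a_1^{(1,1)}$ (with $a_1^{(1,1)} = \bigl(\begin{smallmatrix} 0 & 1\\ -1/\mu & 0\end{smallmatrix}\bigr)$ the upper-left block of $a_1$), $c\in S^2_{\cl}(\R^n;\C)$ has principal symbol $-|\xi|^2$, and $r\in S^0_{\cl}$. By Remark~\ref{3.8}\,b) and Remark~\ref{2.11}\,b), well-posedness of \eqref{4-4} in $X_p$ is equivalent to well-posedness of $\op[\diag(b,c)]$ in $L^p(\R^n;\C^3)$, and the two blocks can be analyzed independently.

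The parabolic scalar block $\op[c]$ generates an analytic semigroup on $L^p(\R^n)$ for every $p\in(1,\infty)$ by Theorem~\ref{3.7}\,b). The first-order block $\op[b]$ has principal symbol $|\xi|\,a_1^{(1,1)}$, whose eigenvalues $\pm i|\xi|/\sqrt\mu$ are purely imaginary and depend only on $|\xi|$, so Theorem~\ref{3.9}\,c) forces $n=1$ as a necessary condition for well-posedness in $L^p$ with $p\ne 2$. Conversely, when $n=1$, the symbol $|\xi|\,a_1^{(1,1)}$ splits as $\xi a_1^{(1,1)}\chi_{[0,\infty)}(\xi) - \xi a_1^{(1,1)}\chi_{(-\infty,0)}(\xi)$; the matrix $a_1^{(1,1)}$ has distinct purely imaginary eigenvalues and is hence diagonalizable, so Remark~\ref{3.8a}\,f) gives well-posedness of the Cauchy problem for $\op[\xi a_1^{(1,1)}]$ in $L^p(\R)$, and the Mikhlin multiplier property of $\chi_{[0,\infty)}$ allows us to combine the two half-line contributions exactly as in the proof of Theorem~\ref{4.3}. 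Together with the parabolic block, this yields $C_0$-semigroup generation in $X_p$ for $n=1$.

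The main obstacle is the approximate diagonalization step itself: one must verify that the Sylvester equations $[a_0, s_{-k}]=r_{-k}$ are solvable at every order without producing diagonal contributions that would destroy the separation, that the asymptotic sum yields $S\in S^0_{\cl}$ with invertible $S^{-1}\in S^0_{\cl}$, and that the symbolic conjugation corresponds to operator-level conjugation modulo an $S^0$-remainder, so that Remark~\ref{2.11}\,b) is indeed applicable. Once this pseudodifferential bookkeeping is in place, all remaining steps are direct applications of the results developed in Section~3.
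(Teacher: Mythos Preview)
Your proposal is correct and follows the same overall strategy as the paper: reduce to $\tilde a$ via Corollary~\ref{2.9}, then conjugate by an $S^0$-transformation to separate the second-order parabolic direction from the first-order hyperbolic part, and finally invoke Theorem~\ref{3.9}\,c) and the one-dimensional splitting $|\xi|=\xi\chi_{[0,\infty)}-\xi\chi_{(-\infty,0)}$. The execution, however, differs. You carry out the Kozhevnikov block-diagonalization abstractly, solving Sylvester equations $[a_0,s_{-k}]=r_{-k}$ order by order and taking an asymptotic sum to obtain a $2\times 2$ hyperbolic block $b$ with principal symbol $|\xi|\,a_1^{(1,1)}$ and a scalar parabolic block $c$. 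The paper instead writes down an explicit $\xi$-dependent matrix $S(\xi)\in S^0_{\cl}(\R^n;\C^{3\times 3})$ that \emph{fully} diagonalizes $|\xi|^2 a_0+|\xi|a_1$ modulo an $S^0$-remainder, obtaining the three scalar entries $-|\xi|^2$, $i|\xi|/\sqrt\mu$, $-i|\xi|/\sqrt\mu$ directly. Your route is more systematic and would generalize without change to larger systems, at the cost of the asymptotic-sum and invertibility bookkeeping you flag; the paper's explicit computation sidesteps that machinery entirely (no asymptotic sums, no iteration), since $S$ and $S^{-1}$ are given in closed form and one checks by hand that both lie in $S^0_{\cl}$. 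Either way the endgame is identical, and your treatment of part~(a) matches the paper's.
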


\begin{proof}
  a) Again, the first statement is a straightforward application of the Lumer-Phillips theorem. For the second statement, we use the fact that $a(\xi)$ has eigenvalues with bounded real part and unbounded imaginary part, see \cite{racke-ueda16}, (4.40).

  b) In contrast to the proof of Theorem~\ref{4.4} b), we cannot apply Theorem~\ref{3.9} c) as the nontrivial eigenvalue of the principal symbol $a_0$ has negative imaginary part. Therefore, we use the idea of an approximate diagonalization procedure which was introduced in \cite{kozhevnikov96}, Section~2.4, on closed manifolds and in \cite{denk-saal-seiler09}, Section~3.3, in $\R^n$.

  By bounded perturbation, the  operator $A_p$ generates a $C_0$-semigroup in $X_p$ if and only if the operator $B_p$ corresponding to the mixed-order symbol
  \[ b(\xi) := |\xi|^2 a_0 + |\xi| a_1 = \left(\begin{array}{ccc} 0 & {|\xi|} & 0\\ -\frac{{|\xi|}}{{\mu}} & 0 & \frac{{|\xi|}}{{\mu}}\\ 0 & - {|\xi|} & - {{|\xi|}}^2 \end{array}\right)\]
  (for large $|\xi|$) generates a $C_0$-semigroup in $L^p(\R^n;\R^3)$. We define the transformation matrix
\[  S := \frac1{\sqrt\mu}\left(\begin{array}{ccc} 0 & \sqrt{{\mu}} & \sqrt{{\mu}}\\ \frac{1}{\sqrt{{\mu}}\, {|\xi|}} & {i} & -{i}\\ - \sqrt{{\mu}} & -\frac{{i}}{{|\xi|}} & \frac{{i}}{{|\xi|}} \end{array}\right)
.\]
As in the proof of Corollary~\ref{2.9}, we see that well-posedness is invariant under similarity transforms $b(\xi)\mapsto S^{-1}(\xi)b(\xi)S(\xi)$. An explicit calculation shows
\[ S^{-1}(\xi) =\frac 1{2(\mu|\xi|^2-1)} \left(\begin{array}{ccc} 0 & - 2\, {\mu}\, {|\xi|} & - 2\, {\mu}\, {{|\xi|}}^2\\ {\mu}\, {{|\xi|}}^2 - 1 & - {{\mu}}^{3/2}\, {{|\xi|}}^2\, {i} & - \sqrt{{\mu}}\, {|\xi|}\, {i}\\ {\mu}\, {{|\xi|}}^2 - 1 & {{\mu}}^{3/2}\, {{|\xi|}}^2\, {i} & \sqrt{{\mu}}\, {|\xi|}\, {i} \end{array}\right)
\]
and
\begin{equation}\label{4-6}
 \tilde b(\xi) := S^{-1}(\xi) b(\xi) S(\xi) = \begin{pmatrix} -|\xi|^2 & 0& 0\\0 & \frac{|\xi|}{\sqrt\mu} i & 0\\ 0 &0&
-\frac{|\xi|}{\sqrt\mu} i\end{pmatrix}  + R(\xi)
\end{equation}
with
\[ R(\xi) =  \frac 1{2(\mu |\xi|^2-1)} \left(\begin{array}{ccc} 2{{|\xi|}}^2 & 2{{|\xi|}}^2 + \frac{{2|\xi|}\, {i}}{\sqrt{{\mu}}} & {{2|\xi|}}^2 - \frac{{2|\xi|}\, {i}}{\sqrt{{\mu}}}\\ |\xi|^2 - \frac{1}{ {\mu}} + \frac{{|\xi|}\, {i}}{\sqrt{{\mu}}} &  -  |\xi|^2  + \frac{{|\xi|}\, {i}}{  \sqrt{{\mu}}} & |\xi|^2 + \frac{{|\xi|}\, {i}}{ \sqrt{{\mu}}}\\ |\xi|^2 - \frac{1}{ {\mu}} - \frac{{|\xi|}\, {i}}{ \sqrt{{\mu}}} & |\xi|^2 - \frac{{|\xi|}\, {i}}{ \sqrt{{\mu}}} &  - |\xi|^2 - \frac{{|\xi|}\, {i}}{  \sqrt{{\mu}}} \end{array}\right)
.\]
We see that $S, S^{-1}\in S^0_{\cl}(\R^n,\C^{3\times 3})$. Therefore, the symbol $S$ induces an isomorphism $S(D)\colon L^p(\R^n;\C^3)\to L^p(\R^n;\C^3)$, and $B_p$ is well-posed in $L^p(\R^n;\C^3)$ if and only if $\tilde B_p := S(D)^{-1} B_p S(D)$ is well-posed in $L^p(\R^n;\C^3)$. Moreover, we have $R\in S^0_{\cl}(\R^n,\C^{3\times 3})$. Therefore, $\tilde B_p$ is a bounded perturbation of the operator related to the diagonal matrix in \eqref{4-6}.

Altogether we have seen that $A_p$ generates a $C_0$-semigroup in $X_p$ if and only if the operator related to the symbol
\[ \begin{pmatrix} -|\xi|^2 & 0& 0\\0 & \frac{|\xi|}{\sqrt\mu} i & 0\\ 0 &0&
-\frac{|\xi|}{\sqrt\mu} i\end{pmatrix}\]
generates a $C_0$-semigroup in $L^p(\R^n,\R^3)$. Now we can consider each component separately. If $A_p$ generates a $C_0$-semigroup, then the eigenvalue $\mu^{-1/2}|\xi|$ has to be a linear function of $\xi$ at the points of differentiability which implies $n=1$. On the other hand, in the case $n=1$ we can write
$|\xi| = -\xi \chi_{(-\infty,0)}(\xi) + \xi \chi_{[0,\infty)}(\xi)$ (cf. the proof of Theorem~\ref{4.3}) and obtain that $A_p$ generates a $C_0$-semigroup.
\end{proof}

\bibliographystyle{abbrv}

\end{document}